\documentclass[12pt]{article}
\usepackage[T1]{fontenc}
\usepackage[utf8]{inputenc}
\usepackage{amsthm}
\usepackage{amssymb, url}
\usepackage{amsmath}
\usepackage{amsfonts}
\usepackage{graphicx}
\usepackage{xcolor}
\definecolor{darkgreen}{rgb}{0.0, 0.6, 0.0}
\definecolor{revcolor}{rgb}{0.60, 0.00, 0.50}

\usepackage{float}
\usepackage{multicol}
\usepackage{subcaption}
\usepackage{multirow}
\usepackage[margin=1in]{geometry}
\usepackage{comment}
\newcommand{\tn}[1]{|\|#1\||}
\newtheorem{theorem}{Theorem}[section]
\newtheorem{lemma}[theorem]{Lemma}
\newtheorem{corollary}[theorem]{Corollary}

\newtheorem{remark}[theorem]{Remark}

\title{Numerical analysis of data assimilation for slightly compressible flow}
\author{Aytekin \c{C}{\i}b{\i}k\thanks{Department of Mathematics, Gazi University, Ankara, T\"urkiye}, \and Rui Fang\thanks{Department of Mathematics, The Ohio State University, Columbus, OH, 43210, USA (fang.1211@osu.edu). Corresponding author: Rui Fang.}}

\date{\today}
\begin{document}
\maketitle

\begin{center}
  \textit{This work is dedicated to Professor William Layton on the occasion of his 70th birthday.}
\end{center}

\begin{abstract}
Continuous data assimilation improves flow predictions by continually nudging a model toward available observational data. For slightly compressible flow, a recent model addresses the limitations of velocity-only nudging by assimilating both velocity and pressure data and nudging both quantities into the incompressible Navier--Stokes equations \cite{CibikFang26}; continuous-in-time error estimates and preliminary experiments show that this joint nudging is effective and substantially reduces the model error relative to velocity-only nudging. Motivated by these results, we carry out the numerical analysis of the model and its finite element discretizations. We establish stability and error estimates for the semi-discrete scheme and for the fully discrete, linearized backward Euler scheme. The analysis shows an infinite predictability horizon: the effect of the initial error decays exponentially in time, and the model error is first order in the observation resolution \(H\) and of order \(\mu_1^{-1/2}\) in the pressure nudging parameter $\mu_1$. Balancing these two error terms, we choose \(\mu_1=\mathcal{O}(H^{-2})\), which yields the optimal convergence rate. Numerical experiments confirm the predicted rates.
\end{abstract}

% REQUIRED
\noindent\textbf{Keywords.} 
continuous data assimilation, nudging, predictability, Navier--Stokes equations, slightly compressible flow.

\section{Introduction} 
Accurate prediction of fluid flow is limited by uncertainty in both the initial state
and the governing model. Continuous data assimilation (CDA) combines observational data with mathematical models to improve the accuracy of numerical predictions of fluid flows. In 2014, Azouani, Olson, and Titi \cite{AOT14} introduced a simple CDA algorithm based on nudging for incorporating observational data into mathematical models. Since then, CDA has been  studied for a wide range of fluid flow models, with most existing works focusing on the incompressible Navier--Stokes equations (NSE).

We consider an incompressible viscous fluid flow in a domain $\Omega \subset \mathbb{R}^d$, with $d=2$ or $3$, governed by NSE:
\begin{align}
u_{t}+u\cdot \nabla u-\nu \triangle u+\nabla p &= f(x),\quad
\nabla \cdot u=0,\quad \text{in }\Omega,\ 0<t\leq T, \\
u &= 0\ \text{ on }\partial \Omega,\quad u(x,0)=u_{0}(x).
\end{align}

Due to the nonlinear nature of the NSE, even small uncertainties in the initial condition can grow over time, limiting the predictability of long-term simulations \cite{Lorenz1963}. This motivates CDA, which incorporates observational data into the governing equations to improve prediction accuracy.

We denote the coarse velocity observations by
\begin{equation*}
u_{\text{obs}}(x,t)=I_{H}u(x,t), 
\end{equation*}
where $I_H$ denotes an observation (interpolation) operator associated with spatial resolution $H$. In the CDA framework, observational data are incorporated into the governing equations through nudging terms that drive the assimilated solution toward the observed state.

Nudging for data assimilation originates from the pioneering work of Luenberger \cite{Luenberger1964} and has been extensively applied in geophysical flow simulations; see, for example, Kalnay \cite{kalnay2003atmospheric} and Navon \cite{NAVON199855}. Since the work of Azouani, Olson, and Titi \cite{AOT14}, CDA has been extensively analyzed for a wide range of fluid flow models, including the three-dimensional NSE studied by Biswas and Price \cite{biswas2021continuous}. Larios, Rebholz, and Zerfas \cite{larios2019global} established the global-in-time stability and accuracy of a fully discrete finite element CDA algorithm; {\c{C}}{\i}b{\i}k, Fang, Layton, and Siddiqua \cite{ccibik2025adaptive} developed a self-adaptive nudging parameter selection algorithm. Fang and Pakzad \cite{fang2026globalrecoverylocaldata} showed that local nudging can recover global information. Cao, Giorgini, Jolly, and Pakzad \cite{cao2022continuous} studied the three-dimensional Ladyzhenskaya model. A two-step modular nudging algorithm was later developed in \cite{CFL26}, allowing existing numerical codes to be used with minimal modification, and Yang \cite{yang2026higheraccuracymodulardata} subsequently extended the method to second-order accuracy.

More recently, CDA with model mismatch has received increasing attention. A recent work of Feireisl, Gwiazda, and \'{S}wierczewska-Gwiazda \cite{feireisl2026} applied standard velocity nudging to a reduced rotating Oberbeck--Boussinesq model using observations generated from the compressible primitive equations. In \cite{CIBIK2026172}, a model mismatch of the form $\omega R(u)$, where $R$ is a Lipschitz continuous operator, was considered, and the resulting model error was shown to decay at the rate $\mathcal{O}(\chi^{-1/2})$, where $\chi$ is the velocity nudging parameter. Carlson, Hudson, and Larios considered the discrepancy between the approximate and true viscosity in the two-dimensional NSE \cite{Carlson_Hudson_Larios_2020}. Clark Di Leoni, Mazzino, and Biferale \cite{PhysRevFluids.3.104604} used CDA to infer unknown physical parameters in turbulent flows, including the rotation rate associated with the Coriolis force.

Although the incompressible NSE are widely used in practice, real fluid flows generally exhibit slight compressibility \cite{oran2002fluid}. For \emph{slightly compressible} flows at low Mach numbers, under the Stokes hypothesis, the slightly compressible NSE are given by
\begin{align}
u_t +u \cdot \nabla u +\tfrac{1}{2} (\nabla \cdot u) u
    -\nu \Delta u -\tfrac{1}{3}\nu \nabla(\nabla \cdot u) +\nabla p
    &= f(x), \label{compressible_momentum}\\
\frac{1}{c^2}\!\left(\frac{\partial p}{\partial t} + u\cdot \nabla p\right)
    +\nabla \cdot u &= 0, \label{compressible_continuity}
\end{align}
where $c$ is the isentropic speed of sound.

The continuity equation couples pressure and velocity. The sound speed $c$ characterizes the compressibility of the system. As $c\to\infty$, the model formally reduces to the incompressible NSE.

Motivated by this setting, \cite{CibikFang26} proposed a CDA model that assimilates data from the slightly compressible NSE into the incompressible NSE through nudging both velocity and pressure. This paper establishes the numerical analysis of this formulation.

\subsection{Algorithm}
We recall the velocity-pressure nudging CDA algorithm introduced in \cite{CibikFang26}, which serves as the basis for the present analysis. The algorithm assimilates data from a slightly compressible reference flow, stated in (\ref{compressible_momentum})--(\ref{compressible_continuity}), into an incompressible Navier--Stokes model by simultaneously correcting the velocity and pressure fields toward the observations.
The assimilated system reads
\begin{align}
v_t +v \cdot \nabla v -\nu \Delta v +\nabla q
    - \chi I_H(u-v) &= f(x), \label{momentum_eqn_v1}\\
\nabla \cdot v - \mu_1 I_H(p-q) - \mu_2(I_H(q)-q) &= 0,
    \label{continuity_eqn_v1}
\end{align}
where \(\chi\) is the velocity nudging parameter, and \(\mu_1,\mu_2\) are pressure nudging parameters. The parameter \(\mu_1\) assimilates the observed pressure data, while \(\mu_2\) regularizes unresolved pressure modes through \(q-I_H(q)\).

The analysis relies on two conditions: the observation density is sufficiently high ($H$ is sufficiently small) and the velocity nudging parameter $\chi$ is sufficiently large \cite{CFL26}. Specifically, for the continuous analysis, we choose $H$ and $\chi$ so that the following conditions hold:
\begin{eqnarray}\label{condi_3d}
\chi-\frac{27}{16}\frac{C_2^4}{\nu^3}\|\nabla v\|_{L^{\infty
}(0,T;L^{2}(\Omega ))}^{4} &\geq &\alpha \chi >0, \\
\nu -2\left( C_{1}H\right)^{2}\chi  &>&0, \label{h_condi}
\end{eqnarray}
for some fixed $\alpha \in (0,1)$.

The remainder of the paper is organized as follows. Section 2 establishes stability estimates for the continuous problem, the semi-discrete scheme, and the fully discrete linearized backward Euler and implicit-explicit second-order backward differentiation formula (BDF2-IMEX) schemes. Section 3 establishes the semi-discrete error estimates and the fully discrete error estimates for the linearized backward Euler scheme. Section 4 presents numerical experiments, and Section 5 concludes the paper.
 
\subsection{Notations and Preliminaries}
We denote the $L^{2}(\Omega )$ norm and inner product by $\Vert \cdot \Vert $ and $(\cdot , \cdot)$, respectively. By $\Vert \cdot \Vert_{L^{p}}$ we indicate the $L^{p}(\Omega )$ norm, and \(|\cdot|_{m+1}\) denotes the \(H^{m+1}(\Omega)\) seminorm. The solution spaces $X$ for the velocity and $Q$ for the pressure are defined as:
\begin{equation*}
\begin{aligned}
    &X:=(H_0^1(\Omega))^d=\{ v\in (L^2(\Omega))^d: \nabla  v\in (L^2(\Omega))^{d \times d}\ \text{and}\  v=0\ \text{on}\ \partial\Omega\},\\
    &Q:=L^2_0(\Omega)=\{q\in L^2(\Omega): \int_\Omega q\ d x=0\}.
\end{aligned}
\end{equation*}
The dual norm on \(X'=(H^{-1}(\Omega))^d\) is defined by
\begin{equation*}
\|f\|_{-1}
:=
\sup_{0\neq v\in X}
\frac{\langle f,v\rangle_{X',X}}{\|\nabla v\|},
\qquad f\in X'.
\end{equation*}
When \(f\in (L^2(\Omega))^d\), the duality pairing reduces to the
\(L^2(\Omega)\) inner product:
\[
\langle f,v\rangle_{X',X}=(f,v).
\]

We define discrete time notations as follows. The time step is denoted by $\Delta t >0$ and $t_n = n\Delta t, n = 0,1,\cdots,N = \frac{T}{\Delta t}$. Given a Banach space $Y$, we define the following norms:
\begin{equation*}
		|\| v \||_{l^{p} \left( Y \right) } := \left(\Delta t\ \sum_{n=0}^{N}\|v^n  \|_Y^p \right)^{1/p}\quad \text{and} \quad |\| v \||_{l^{\infty}\left(Y\right)} := \max_{0\leq n \leq N } \| v^n \|_Y .
\end{equation*}

The finite element method for this problem involves picking finite element spaces \cite{L08}, $X^h\subset X$ and $Q^h\subset Q$. We assume that $(X^h, Q^h)$ are conforming and have the following approximation properties: for $u\in \left(H^{m+1}(\Omega)\right)^d\ \cap\ \left(H_0^1(\Omega)\right)^d$ and $p\in H^{m}(\Omega)$,
\begin{equation}\label{prop}
\begin{aligned}
\inf_{v^h\in X^h}\{\|u-v^h\|+h\|\nabla(u-v^h)\|\}&\leq Ch^{m+1}|u|_{m+1},
\\\inf_{q^h\in Q^h}\|p-q^h\|&\leq Ch^{m}|p|_{m},
\end{aligned}
\end{equation}
\begin{equation}\label{infsup}
\inf_{q^h\in Q^h}\sup_{ v^h\in X^h}\frac{(q^h,\nabla\cdot  v^h)}{\|q^h\|\|\nabla  v^h\|}\geq \gamma_h>0.
\end{equation}
Here $\gamma_h > 0$ denotes the discrete inf–sup constant.
We use the following inequalities. The first estimate of this type was derived in 1958 by Nash \cite{N58}. The improvements to some constants are mentioned in \cite{CFL26}.

\begin{lemma}\label{norms_property} For any vector function $u:{{{{\mathbb{R}}}}}%
^{d}\rightarrow {{{{\mathbb{R}}}}}^{d}$ with compact support and with the
indicated $L^{p}$ norms finite, 
\begin{align*}
\Vert u\Vert _{L^{4}({{{{\mathbb{R}}}}}^{2})}& \leq 2^{1/4}\Vert u\Vert^{1/2}\Vert \nabla u\Vert^{1/2}, \\
\Vert u\Vert_{L^{4}({{{{\mathbb{R}}}}}^{3})}& \leq \left(\frac{4}{3\sqrt{3}}\right)^{3/4}\Vert
u\Vert ^{1/4}\Vert \nabla u\Vert ^{3/4}, \\
\Vert u\Vert _{L^{6}({{{{\mathbb{R}}}}}^{3})}& \leq \frac{2}{\sqrt{3}}\Vert
\nabla u\Vert.
\end{align*}
\end{lemma}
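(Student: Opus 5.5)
The three inequalities are sharp-constant versions of the Ladyzhenskaya and Sobolev inequalities. I would prove them with the one-dimensional elementary estimate plus a Gagliardo-type product argument, and then interpolate. Since $u$ has compact support, by density I may assume $u\in C_c^\infty$ and work componentwise. For vector fields the scalar argument applies to $|u|$, using $|\nabla|u||\le|\nabla u|$ a.e.

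\textbf{2D $L^4$ bound.} For a scalar $w\in C_c^\infty(\mathbb{R}^2)$, write $w^2(x_1,x_2)=\int_{-\infty}^{x_1}2w\,\partial_1 w\,ds$. Averaging the representations from $-\infty$ and from $+\infty$ gives
\[
w^2(x)\le \int_{\mathbb{R}}|w\,\partial_1 w|\,dx_1 =: g_1(x_2).
\]
Similarly $w^2(x)\le g_2(x_1):=\int_{\mathbb{R}}|w\,\partial_2 w|\,dx_2$. Multiplying the two bounds and integrating, Fubini gives
\[
\|w\|_{L^4}^4\le \Big(\int|w\,\partial_1w|\Big)\Big(\int|w\,\partial_2w|\Big)\le \|w\|^2\|\partial_1w\|\|\partial_2w\|\le \tfrac12\|w\|^2\|\nabla w\|^2,
\]
by Cauchy--Schwarz and AM--GM. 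That already gives the constant $2^{-1/4}$, which is better than the stated $2^{1/4}$. For vector $u$ I would sum over components: $\|u\|_{L^4}^4=\int(\sum_i u_i^2)^2$. I would apply the bound to $w=|u|$, so no loss occurs. The stated constant $2^{1/4}$ is therefore comfortably reached.

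\textbf{3D $L^6$ bound.} I would not try to derive the sharp Sobolev constant from scratch. The stated constant $2/\sqrt3\approx1.155$ is larger than Talenti's sharp constant $S_3=\big(\tfrac{1}{3}\big)^{1/2}\big(\tfrac{2}{\pi}\big)^{2/3}\cdot\ldots\approx 0.57$ for $\|w\|_{L^6}\le S_3\|\nabla w\|$. So I would either invoke Talenti/Aubin, or give an elementary Gagliardo--Nirenberg proof. In the elementary route I apply the 1D estimate $|w|^{4}\le 2\int|w|^3|\partial_iw|\,dx_i$ in each direction. Then the Loomis--Whitney-type Hölder inequality over three variables gives
\[
\int |w|^6\le \prod_{i=1}^3\Big(2\int|w|^3|\partial_iw|\Big)^{1/2}\le 2^{3/2}\|w\|_{L^6}^3\prod_i\|\partial_iw\|^{1/2}.
\]
AM--GM on $\prod\|\partial_iw\|^{1/2}\le(\|\nabla w\|^2/3)^{3/4}$ and cancellation then give $\|w\|_{L^6}\le \big(2^{3/2}3^{-3/4}\big)^{1/3}\|\nabla w\|^{1/2}\cdot$(a power that must close). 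I would track the exponents carefully here. This step, getting a constant at most $2/\sqrt3$ by elementary means, is the main obstacle. If the bookkeeping is too lossy, I would fall back on Talenti's sharp constant, which suffices.

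\textbf{3D $L^4$ bound.} This follows by interpolation: $\|u\|_{L^4}\le\|u\|^{1/4}\|u\|_{L^6}^{3/4}$ by Hölder, since $\tfrac14=\tfrac{1/4}{2}+\tfrac{3/4}{6}$. Inserting the $L^6$ bound gives
\[
\|u\|_{L^4}\le\Big(\tfrac{2}{\sqrt3}\Big)^{3/4}\|u\|^{1/4}\|\nabla u\|^{3/4}.
\]
Note that $\big(\tfrac{2}{\sqrt3}\big)^{3/4}$ is not the stated $\big(\tfrac{4}{3\sqrt3}\big)^{3/4}$, which is smaller. To reach the stated constant I would run the product argument directly on $|w|^4$ in three directions. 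I would use $w^2\le\int|w\partial_iw|\,dx_i$ in each coordinate, combined through the Gagliardo (Loomis--Whitney) inequality, then AM--GM with weights $1/3$. This yields a factor $\prod_i\|\partial_iw\|^{1/2}\le 3^{-3/4}\|\nabla w\|^{3/2}$. Together with the remaining $\|w\|_{L^4}$ or $\|w\|$ factors, this should give exactly $(4/(3\sqrt3))^{3/4}$; I expect the cited improvement in \cite{CFL26} to proceed this way.
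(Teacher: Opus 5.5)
The paper states this lemma without proof; it only cites Nash and a later improvement of the constants. Your argument therefore has to stand on its own.

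Your 2D part is correct and even gives the better constant $2^{-1/4}$. The reduction to $w=|u|$ via $|\nabla|u||\le|\nabla u|$ is also fine.

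In the 3D $L^6$ part, an exponent slip made the elementary route look lossy. Each of the three factors $\bigl(2\int|w|^3|\partial_iw|\bigr)^{1/2}$ contributes $\|w\|_{L^6}^{3/2}$. So the right-hand side is $2^{3/2}\|w\|_{L^6}^{9/2}\prod_i\|\partial_iw\|^{1/2}$, not $2^{3/2}\|w\|_{L^6}^{3}\prod_i\|\partial_iw\|^{1/2}$. Dividing by $\|w\|_{L^6}^{9/2}$ and using $\prod_i\|\partial_iw\|^{1/2}\le 3^{-3/4}\|\nabla w\|^{3/2}$ gives $\|w\|_{L^6}^{3/2}\le 2^{3/2}3^{-3/4}\|\nabla w\|^{3/2}$. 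That is exactly $\|w\|_{L^6}\le \frac{2}{\sqrt3}\|\nabla w\|$, so no appeal to Talenti is needed. If you do cite Talenti, the sharp constant is $\frac{1}{\sqrt3}(2/\pi)^{2/3}\approx 0.43$, not $0.57$.

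The genuine gap is the 3D $L^4$ estimate. You correctly see that interpolating with $2/\sqrt3$ only gives $(2/\sqrt3)^{3/4}\approx 1.11$. However, your proposed repair does not produce an $L^4$ bound at all. Inserting $w^2\le\int|w\,\partial_iw|\,dx_i$ into the three-factor Loomis--Whitney inequality controls $\int (w^2)^{3/2}$, so what you get is $\|w\|_{L^3}^3\le\|w\|^{3/2}\prod_i\|\partial_iw\|^{1/2}$. There are no leftover factors that turn this into $\|w\|_{L^4}^4$. Interpolating this $L^3$ bound with the $L^6$ bound gives only $\sqrt2\,3^{-3/8}\approx 0.94$, still above the target $(4/(3\sqrt3))^{3/4}\approx 0.82$. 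The claim that the route gives ``exactly'' the stated constant is therefore unsupported.

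The missing idea is to slice, rather than apply Loomis--Whitney in all three directions at once:
\begin{itemize}
\item For fixed $x_3$, your 2D argument gives $\int_{\mathbb{R}^2}w^4\,dx_1dx_2\le\|w(\cdot,x_3)\|^2\|\partial_1w(\cdot,x_3)\|\|\partial_2w(\cdot,x_3)\|$, with norms over $\mathbb{R}^2$.
\item The 1D bound in $x_3$, integrated over $(x_1,x_2)$, gives $\sup_{x_3}\|w(\cdot,x_3)\|^2\le\|w\|\,\|\partial_3w\|$.
\item Cauchy--Schwarz in $x_3$ then yields
\[
\|w\|_{L^4}^4\le\|w\|\,\|\partial_1w\|\,\|\partial_2w\|\,\|\partial_3w\|\le 3^{-3/2}\|w\|\,\|\nabla w\|^3 .
\]
\end{itemize}
This gives the constant $3^{-3/8}\approx 0.66$, below the stated one. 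Alternatively, since you were prepared to cite Talenti anyway, H\"older interpolation with the sharp constant gives $S_3^{3/4}\approx 0.53$, which also suffices.
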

The standard explicitly skew-symmetrized trilinear form is defined by
\begin{equation}
\begin{aligned}
b^*(u,v,w)
&=\frac12 (u\cdot\nabla v,w)-\frac12(u\cdot\nabla w,v)  \\
&=(u\cdot\nabla v,w)+\frac12((\nabla\cdot u)v,w).
\end{aligned}
\end{equation}
Moreover, we define
\begin{equation}
M:=\sup_{u,v,w\in X}
\frac{b^*(u,v,w)}
{\|\nabla u\|\,\|\nabla v\|\,\|\nabla w\|}
<\infty.
\end{equation}
\begin{lemma}
[Lemma 6.14, p. 311 - p. 312, \cite{john2016finite}] \label{nonlinear_bound_John} There is a $C_2<\infty$ with
\begin{equation}
\begin{split}
 b^*(u,v, w)
&\leq C_2\sqrt{\|u\|}\sqrt{\|\nabla u\|} \|\nabla v\|\|\nabla w\|,
 \end{split}
\end{equation}
for $u,v,w \in (H^1_0(\Omega))^d$, $d=2$ or $3$.
\end{lemma}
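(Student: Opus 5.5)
The statement to prove is Lemma~\ref{nonlinear_bound_John}. My plan is to bound the two halves of the skew-symmetric form separately, using the first representation
\[
b^*(u,v,w)=\tfrac12 (u\cdot\nabla v,w)-\tfrac12(u\cdot\nabla w,v).
\]

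\textbf{First half.} For $\tfrac12(u\cdot\nabla v,w)$, I apply H\"older with exponents $(3,2,6)$:
\[
|(u\cdot\nabla v,w)|\le \|u\|_{L^3}\|\nabla v\|\,\|w\|_{L^6}.
\]
Then I use two ingredients:
\begin{itemize}
\item the interpolation $\|u\|_{L^3}\le \|u\|^{1/2}\|u\|_{L^6}^{1/2}$, which follows from H\"older since $\tfrac13=\tfrac12\cdot\tfrac12+\tfrac12\cdot\tfrac16$;
\item the Sobolev bound $\|\cdot\|_{L^6}\le C\|\nabla\cdot\|$ for $H^1_0$ functions.
\end{itemize}
In $d=3$ the Sobolev bound is exactly the third inequality of Lemma~\ref{norms_property}, applied after extending $u,w$ by zero outside $\Omega$ (compact support, gradients unchanged). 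In $d=2$, on a bounded domain, I use Ladyzhenskaya/Gagliardo--Nirenberg together with Poincar\'e to get $\|w\|_{L^6}\le C\|\nabla w\|$. Combining these gives
\[
|(u\cdot\nabla v,w)|\le C\|u\|^{1/2}\|\nabla u\|^{1/2}\|\nabla v\|\,\|\nabla w\|.
\]

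\textbf{Second half.} For $\tfrac12(u\cdot\nabla w,v)$, the same argument with the roles of $v$ and $w$ exchanged gives
\[
|(u\cdot\nabla w,v)|\le \|u\|_{L^3}\|\nabla w\|\,\|v\|_{L^6}\le C\|u\|^{1/2}\|\nabla u\|^{1/2}\|\nabla w\|\,\|\nabla v\|.
\]

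Summing the two halves yields the claim, with $C_2$ depending only on the Sobolev and Poincar\'e constants, hence on $\Omega$ and $d$.

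\textbf{Main point to check.} The only delicate step is the 2D Sobolev embedding with a gradient-only right-hand side. Here I rely on $\Omega$ being bounded, so that Poincar\'e's inequality $\|w\|\le C_P\|\nabla w\|$ absorbs the $L^2$ part of the $H^1$ norm. Everything else is H\"older's inequality and interpolation.
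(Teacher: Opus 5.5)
Your argument is correct. The paper gives no proof of its own and simply cites John's Lemma 6.14; your route is the standard proof of that estimate: bound both halves of the fully skew-symmetric form using the $L^3$--$L^2$--$L^6$ H\"older split, the interpolation $\|u\|_{L^3}\le\|u\|^{1/2}\|u\|_{L^6}^{1/2}$, and $\|\cdot\|_{L^6}\le C\|\nabla\cdot\|$ (with Poincar\'e in 2D). Choosing the first representation rather than $(u\cdot\nabla v,w)+\frac12((\nabla\cdot u)v,w)$ is the essential step, because it keeps every derivative off $u$ and so produces the $\|u\|^{1/2}\|\nabla u\|^{1/2}$ factor.
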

\textbf{Assumption A1} on $I_H$: $I_{H}$ is an $L^{2}$ projection satisfying, for
all $\phi \in \left( H_{0}^{1}(\Omega )\right) ^{d},$
\begin{equation*}
\begin{split}
\|\phi -I_{H}\phi \|\leq C_{1}H\|\nabla \phi \|.
\end{split}
\end{equation*}

\begin{lemma}[H\"older's and Young's inequalities]\label{holderandyoung}
For any $\sigma>0$, $1\leq p\leq \infty$, $\frac{1}{p}+\frac{1}{q}=1$, the H\"older and Young's inequalities are as follows:
\begin{equation*}
    (u,v)\leq \|u\|_{L^p}\|v\|_{L^q},\ \ \text{and }
    (u,v)\leq \frac{\sigma}{p} \|u\|_{L^p}^p+
    \frac{\sigma^{-q/p}}{q} \|v\|_{L^q}^q.
\end{equation*}
\end{lemma}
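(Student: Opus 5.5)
The statement is Lemma~\ref{holderandyoung}. I will prove Hölder's inequality first and then obtain the weighted Young inequality from it. The scalar Young inequality $ab\le \frac{a^p}{p}+\frac{b^q}{q}$ for $a,b\ge 0$ and conjugate exponents $1<p,q<\infty$ does most of the work. I will derive it from the concavity of the logarithm. For $a,b>0$,
\[
\log\Big(\tfrac1p a^p+\tfrac1q b^q\Big)\ge \tfrac1p\log a^p+\tfrac1q\log b^q=\log(ab),
\]
and the case $ab=0$ is trivial.

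\textbf{Hölder.} The endpoint cases $p=1,q=\infty$ and $p=\infty,q=1$ follow at once from $|(u,v)|\le\int|u||v|\le \|v\|_{L^\infty}\int|u|$. For $1<p<\infty$, if either norm vanishes then the corresponding function is zero a.e. and there is nothing to prove. Otherwise I normalize by setting $\tilde u=u/\|u\|_{L^p}$ and $\tilde v=v/\|v\|_{L^q}$. Applying the scalar Young inequality pointwise to $|\tilde u(x)||\tilde v(x)|$ and integrating gives
\[
\int|\tilde u||\tilde v|\le \tfrac1p+\tfrac1q=1.
\]
Multiplying back by the norms yields $(u,v)\le\int|u||v|\le\|u\|_{L^p}\|v\|_{L^q}$.

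\textbf{Young with $\sigma$.} For $1<p<\infty$, I combine Hölder's inequality with the scalar Young inequality applied to $a=\sigma^{1/p}\|u\|_{L^p}$ and $b=\sigma^{-1/p}\|v\|_{L^q}$. Since $ab=\|u\|_{L^p}\|v\|_{L^q}$, this gives
\[
(u,v)\le ab\le \frac{\sigma}{p}\|u\|_{L^p}^p+\frac{\sigma^{-q/p}}{q}\|v\|_{L^q}^q,
\]
which is exactly the claimed bound. The key point is choosing the exponent $1/p$ on $\sigma$ so that the product $ab$ is independent of $\sigma$; once that is fixed, the rest is routine.

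\textbf{Endpoints and main obstacle.} The only delicate point is how to read the statement at $p=1$ or $p=\infty$. There the term $\frac{\sigma^{-q/p}}{q}\|v\|^q_{L^q}$ has to be interpreted by the usual limiting convention, or else these cases must be excluded from the Young part. I would state explicitly that the Young form is asserted for $1<p<\infty$, while Hölder's inequality holds on the full range $1\le p\le\infty$.
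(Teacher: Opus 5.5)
Your proof is correct. The paper gives no proof of this lemma: it quotes the inequalities as standard tools and then uses them ``without further comment,'' so there is no argument to compare against. Your route is the standard one: log-concavity for the scalar inequality $ab\le a^p/p+b^q/q$, normalization for H\"older, and the rescaling $a=\sigma^{1/p}\|u\|_{L^p}$, $b=\sigma^{-1/p}\|v\|_{L^q}$ for the weighted form, which keeps $ab$ independent of $\sigma$.

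Your endpoint caveat is also justified. As stated with $1\le p\le\infty$, the weighted Young bound has no meaning at $p=1$ or $p=\infty$ without a limiting convention. The paper only ever applies it in scalar form with finite exponents. It uses $p=q=2$ in most places, and $p=4/3$, $q=4$, $\sigma=2\nu/3$ in the bound for $b^*(\phi^h,v^h,\phi^h)$, which reproduces exactly the constant $\frac{27}{32}\frac{C_2^4}{\nu^3}$. So restricting the Young part to $1<p<\infty$ loses nothing, and your scalar inequality already covers how the paper uses it.
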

Throughout the analysis, we use the inequalities in Lemma~\ref{holderandyoung} and the Poincar\'e inequality without further comment.

\begin{lemma}[Emmrich 1999 \cite{emmrich1999discrete}] \label{emmrich1999discrete}
Consider a sequence of inequalities 
\begin{equation}
    \frac{a_{n+1}-a_n}{\Delta t} \leq g_{n+1}+\lambda a_{n+1}, \quad n=0,1,\ldots,
\end{equation}
$\{a_n\}$, $\{g_n\} \subseteq \mathbb{R}$, and $a_0$ and $\{g_n\}$ are given, and $1-\lambda \Delta t>0$. Then, for $n=1,2,\ldots,$
\begin{equation}
a_n \leq (1-\lambda \Delta t)^{-n} \bigg[a_0 +\Delta t \sum_{j=0}^{n-1} (1-\lambda \Delta t)^{j} g_{j+1}\bigg].
\end{equation}
\end{lemma}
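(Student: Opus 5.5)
Since $1-\lambda\Delta t>0$, I will set $\beta:=(1-\lambda\Delta t)^{-1}>0$, multiply the $n$-th inequality through by $\Delta t$, and collect the $a_{n+1}$ terms on the left. This gives the one-step bound
\begin{equation*}
(1-\lambda\Delta t)\,a_{n+1}\leq a_n+\Delta t\, g_{n+1},
\qquad\text{i.e.}\qquad
a_{n+1}\leq \beta a_n+\beta\Delta t\, g_{n+1}.
\end{equation*}
Dividing by $1-\lambda\Delta t$ preserves the inequality direction precisely because this quantity is positive; this is the only place the hypothesis enters. The sign of $\lambda$ is otherwise irrelevant, so the argument covers both $\lambda>0$ (growth) and $\lambda<0$ (decay, which is how it will be used to obtain exponential decay of the initial error).

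Next I will prove the claim by induction on $n$, in the equivalent form
\begin{equation*}
a_n\leq \beta^{n}a_0+\Delta t\sum_{j=0}^{n-1}\beta^{\,n-j}g_{j+1},
\end{equation*}
which is exactly the stated formula after factoring out $\beta^{n}=(1-\lambda\Delta t)^{-n}$, since $\beta^{n-j}=\beta^{n}(1-\lambda\Delta t)^{j}$. The base case $n=1$ is the one-step bound with $n=0$. For the inductive step, I substitute the hypothesis for $a_n$ into $a_{n+1}\leq\beta a_n+\beta\Delta t g_{n+1}$, again using $\beta>0$ so the inequality is preserved. This yields $\beta^{n+1}a_0+\Delta t\sum_{j=0}^{n-1}\beta^{n+1-j}g_{j+1}+\Delta t\,\beta g_{n+1}$, and the final term is the $j=n$ summand. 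That closes the induction.

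There is no real obstacle here. The only points needing care are the positivity of $\beta$, which is needed at each multiplication, and the exponent bookkeeping when reindexing the sum. No sign condition on $g_n$ or $a_n$ is required.
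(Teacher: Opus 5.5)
Your proof is correct. The paper gives no proof of its own; it simply cites Emmrich (1999), and your argument is the standard one for this backward-difference Gronwall inequality: rearrange to $a_{n+1}\le(1-\lambda\Delta t)^{-1}\bigl(a_n+\Delta t\,g_{n+1}\bigr)$, then unroll the recursion by induction, using only the positivity of $1-\lambda\Delta t$ (and $\Delta t>0$).

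Your remark that no sign condition on $g_n$ is needed is the point that lets the paper apply the lemma with $\lambda=-\beta<0$ while keeping the extra dissipative terms on the left-hand side.
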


\section{The discrete problems}
We first recall the weak formulation of
\eqref{compressible_momentum}--\eqref{compressible_continuity}. Find
\((u,p) \in (X, Q)\) such that, for all $(w,\lambda)\in (X, Q)$,
\begin{align}
    (u_t,w) + b^*(u,u,w)+\nu (\nabla u,\nabla w)
    -(p,\nabla \cdot w)
    +\frac{1}{3}\nu(\nabla\cdot u,\nabla\cdot w)
    &= (f,w), \label{weak-compre-mom}\\
    \frac{1}{c^2}(p_t,\lambda)
    +\frac{1}{c^2}(u\cdot \nabla p,\lambda)
    +(\nabla \cdot u,\lambda)
    &=0. \label{weak-compre-conti}
\end{align}
Since $X^h\subset X$ and $Q^h\subset Q$, the exact solution also satisfies
\eqref{weak-compre-mom}--\eqref{weak-compre-conti} for all
$(w^h,\lambda^h)\in (X^h,Q^h)$.

The weak formulation of (\ref{momentum_eqn_v1})--(\ref{continuity_eqn_v1}) is as follows: Find $(v, q) \in (X, Q)$ such that, $\forall w \in X$, and $\forall \lambda \in Q$,
\begin{align}
     (v_t, w) +b^*(v,v,w)+\nu (\nabla v, \nabla w)-(q, \nabla \cdot w)-\chi (I_H(u-v), w) = (f,w),\\
     (\nabla \cdot v, \lambda)-\mu_1 (I_H(p-q), \lambda) -\mu_2 (I_H(q)-q, \lambda) =0.
\end{align}
The semi-discrete approximation of (\ref{momentum_eqn_v1})--(\ref{continuity_eqn_v1}) is ad follows. Denote $v^h(x,0)$ an approximation of $v(x,0)$. The approximated velocity and pressure are maps
\begin{equation}
    v^h:(0,T] \to X^h,\ q^h:(0,T]\to Q^h
\end{equation}
satisfying: $\forall w^h \in X^h$ and $\forall \lambda^h \in Q^h$,
\begin{equation}
\begin{split}
   \left(\frac{\partial v^h}{\partial t}, w^h\right)
+b^*(v^h,v^h,w^h)
+\nu (\nabla v^h, \nabla w^h)
-(q^h,\nabla\cdot w^h)\\
-\chi (I_H(u-v^h),w^h)
= (f,w^h), \label{nudged_time_conti_mom}
\end{split}
\end{equation}
\begin{align}
(\nabla\cdot v^h,\lambda^h)
-\mu_1(I_H(p-q^h),\lambda^h)
-\mu_2(I_Hq^h-q^h,\lambda^h)
=0.\label{nudged_time_conti_conti}
\end{align}
We present two fully discrete schemes, using linearized backward Euler and BDF2-IMEX time discretizations, respectively. They are described as follows:

linearized backward Euler: for all
$w^h\in X^h$, 
\begin{equation}
\begin{split}
\left(\frac{v_h^{n+1}-v_h^n}{\Delta t}, w^h\right)
+ \nu(\nabla v_h^{n+1}, \nabla w^h)
+ b^*(v_h^n, v_h^{n+1}, w^h) \\
- \chi\left(I_H(u(t_{n+1})) - I_H(v_h^{n+1}), w^h\right)
- (q_h^{n+1}, \nabla \cdot w^h)
= (f_h^{n+1}, w^h).\label{BE_momentum}
\end{split}
\end{equation}

BDF2-IMEX: for all
$w^h\in X^h$, 
\begin{equation}
\begin{split}
(\frac{3 v^{n+1}_h - 4v^{n}_h+v^{n-1}_h}{2\Delta t}, w^h) + \nu (\nabla v^{n+1}_h, \nabla w^h) + b^*( 2v^{n}_h - v^{n-1}_h, v^{n+1}_h, w^h)\\
- \chi (I_H(u(t_{n+1}))-I_H (v^{n+1}_h ), w^h)
-(q^{n+1}_h, \nabla \cdot w^h)
 =(f^{n+1}_h, w^h). \label{fully-momen}
\end{split}
\end{equation}
For both time discretizations, the pressure equation is given by: for all $\lambda^h\in Q^h$, 
\begin{equation}
(\nabla \cdot v^{n+1}_h, \lambda^h) - \mu_1 (I_H(p(t_{n+1})-q^{n+1}_h), \lambda^h) -\mu_2 (I_H(q^{n+1}_h)-q^{n+1}_h, \lambda^h) =0.\label{fully-conti}
\end{equation}
\subsection{Stability}
%here stability for semi-discrete and fully discrete case.
In this section, we first prove the stability of time-continuous case (\ref{nudged_time_conti_mom})--(\ref{nudged_time_conti_conti}). We also analyze the stability of the fully discrete linearized backward Euler, described in (\ref{BE_momentum}) and (\ref{fully-conti}), and BDF2-IMEX, described in (\ref{fully-momen})--(\ref{fully-conti}) of the nudged solutions. 

\begin{theorem} [Stability of $v^h$ and $q^h$] Assume $I_H$ is an $L^2$-projection. We have
\begin{equation}
\begin{split}
    \frac{d\|v^h\|^2}{dt}+\nu \|\nabla v^h\|^2 
    +\chi\|I_H(v^h)\|^2 +\alpha_1 \|q^h\|^2 \\
    \leq \chi \|I_H(u)\|^2 + \mu_1 \|I_H(p)\|^2 +\frac{1}{\nu} \|f\|^2_{-1}, \label{stability}
    \end{split}
\end{equation}
where $\alpha_1 = \min\{\mu_1, 2\mu_2\}$.
\end{theorem}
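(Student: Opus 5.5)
Test the momentum equation \eqref{nudged_time_conti_mom} with $w^h=v^h$ and the pressure equation \eqref{nudged_time_conti_conti} with $\lambda^h=q^h$, then add the two identities so that the coupling terms $-(q^h,\nabla\cdot v^h)$ and $(\nabla\cdot v^h,q^h)$ cancel. Skew-symmetry gives $b^*(v^h,v^h,v^h)=0$. We are left with
\begin{equation*}
\begin{split}
\tfrac12\tfrac{d}{dt}\|v^h\|^2+\nu\|\nabla v^h\|^2+\chi(I_Hv^h,v^h)+\mu_1(I_Hq^h,q^h)+\mu_2(q^h-I_Hq^h,q^h)\\
=\chi(I_Hu,v^h)+\mu_1(I_Hp,q^h)+(f,v^h),
\end{split}
\end{equation*}
where the sign of the $\mu_1$ and $\mu_2$ terms follows from moving $-\mu_1(I_H(p-q^h),q^h)-\mu_2(I_Hq^h-q^h,q^h)$ to the appropriate sides.

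Next we use that $I_H$ is an $L^2$-projection, hence self-adjoint and idempotent. This gives $(I_Hv^h,v^h)=\|I_Hv^h\|^2$, $(I_Hq^h,q^h)=\|I_Hq^h\|^2$ and $(q^h-I_Hq^h,q^h)=\|q^h-I_Hq^h\|^2$. Likewise $(I_Hu,v^h)=(I_Hu,I_Hv^h)$ and $(I_Hp,q^h)=(I_Hp,I_Hq^h)$.

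We then bound the right-hand side with Young's inequality (Lemma \ref{holderandyoung}):
\begin{equation*}
\begin{split}
\chi(I_Hu,I_Hv^h)&\le \tfrac{\chi}{2}\|I_Hu\|^2+\tfrac{\chi}{2}\|I_Hv^h\|^2,\\
\mu_1(I_Hp,I_Hq^h)&\le \tfrac{\mu_1}{2}\|I_Hp\|^2+\tfrac{\mu_1}{2}\|I_Hq^h\|^2,\\
(f,v^h)&\le \|f\|_{-1}\|\nabla v^h\|\le \tfrac{1}{2\nu}\|f\|_{-1}^2+\tfrac{\nu}{2}\|\nabla v^h\|^2.
\end{split}
\end{equation*}
Absorbing the terms involving $v^h$ and $q^h$ into the left side and multiplying through by 2 gives
\begin{equation*}
\begin{split}
\tfrac{d}{dt}\|v^h\|^2+\nu\|\nabla v^h\|^2+\chi\|I_Hv^h\|^2+\mu_1\|I_Hq^h\|^2+2\mu_2\|q^h-I_Hq^h\|^2\\
\le \chi\|I_Hu\|^2+\mu_1\|I_Hp\|^2+\tfrac1\nu\|f\|_{-1}^2.
\end{split}
\end{equation*}

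The last step is to recover the full pressure norm. By orthogonality of the projection, $\|q^h\|^2=\|I_Hq^h\|^2+\|q^h-I_Hq^h\|^2$. Therefore $\mu_1\|I_Hq^h\|^2+2\mu_2\|q^h-I_Hq^h\|^2\ge\alpha_1\|q^h\|^2$ with $\alpha_1=\min\{\mu_1,2\mu_2\}$, which gives \eqref{stability}.

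The main point requiring care is the pressure term. We must confirm the signs so that both $\mu_1$ and $\mu_2$ contribute positively, and we must use the Pythagorean splitting of $q^h$. Everything else is routine.
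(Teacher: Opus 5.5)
Your proposal is correct and follows essentially the same route as the paper: test with $(v^h,q^h)$, use that $I_H$ is a self-adjoint idempotent to rewrite the nudging terms as $\|I_H v^h\|^2$, $\|I_H q^h\|^2$ and $\|q^h-I_H q^h\|^2$, bound the data terms, multiply by $2$, and finish with the splitting $\|q^h\|^2=\|I_Hq^h\|^2+\|q^h-I_Hq^h\|^2$. The only cosmetic difference is that the paper writes the data terms with the polarization identity and then drops the nonpositive terms $-\frac{\chi}{2}\|I_H(u-v^h)\|^2$ and $-\frac{\mu_1}{2}\|I_H(p-q^h)\|^2$, which gives exactly your Young's-inequality bound.
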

\begin{proof}
Take $w^h = v^h$, $\lambda^h=q^h$ in (\ref{nudged_time_conti_mom})--(\ref{nudged_time_conti_conti}), and add (\ref{nudged_time_conti_mom})
and (\ref{nudged_time_conti_conti}). We have
\begin{equation}
\begin{split}
\frac{1}{2}\frac{d\|v^h\|^2}{dt} +\nu \|\nabla v^h\|^2 +\chi (I_H(v^h), v^h) +\mu_1 (I_H(q^h), q^h) +\mu_2 (q^h-I_H(q^h), q^h) \\
= (f, v^h) + \chi (I_H(u), v^h)+\mu_1 (I_H(p), q^h).
    \end{split}
\end{equation}
Since $I_H$ is $L^2$-projection, $(I_H(q^h)-q^h, I_H(q^h))=0$. Thus
\begin{equation}
\begin{split}
    \frac{1}{2} \frac{d\|v^h\|^2}{dt} + \nu \|\nabla v^h\|^2 +\chi \|I_H(v^h)\|^2 +\mu_1 \|I_H(q^h)\|^2 +\mu_2\|q^h-I_H(q^h)\|^2 \\
    = (f, v^h) + \chi (I_H(u), I_H(v^h))+\mu_1 (I_H(p), I_H(q^h)).
    \end{split}
\end{equation}
For the term $\chi (I_H(u), I_H(v^h))$, and $\mu_1 (I_H(p), I_H(q^h))$, we use the polarization identity.
\begin{equation}
\begin{gathered}
     \chi (I_H(u), I_H(v^h))=\frac{\chi}{2}\|I_H(u)\|^2 + \frac{\chi}{2} \|I_H(v^h)\|^2 -\frac{\chi}{2}\|I_H(u-v^h)\|^2,\\
     \mu_1 (I_H(p), I_H(q^h))=\frac{\mu_1}{2}\|I_H(p)\|^2 + \frac{\mu_1}{2} \|I_H(q^h)\|^2 -\frac{\mu_1}{2}\|I_H(p-q^h)\|^2,\\
    (f,v^h)\leq \frac{\nu}{2}\|\nabla v^h\|^2 +\frac{1}{2\nu}\|f\|^2_{-1}.
    \end{gathered}
\end{equation}
Thus, we have
\begin{equation}
\begin{split}
    \frac{d\|v^h\|^2}{dt}+\nu \|\nabla v^h\|^2 
    +\chi\|I_H(v^h)\|^2 +\mu_1 \|I_H(q^h)\|^2  \\
    +2 \mu_2\|q^h-I_H(q^h)\|^2 \leq \chi \|I_H(u)\|^2 + \mu_1 \|I_H(p)\|^2 +\frac{1}{\nu} \|f\|^2_{-1}.
    \end{split}
\end{equation}
Using the identity $\|q^h\|^2=\|I_H(q^h)-q^h\|^2+ \|I_H(q^h)\|^2$, the stability of the discrete velocity $v^h$ and pressure $q^h$ follows (\ref{stability}). 
\end{proof}

\begin{theorem}[Linearized backward Euler stability]\label{BE_stability} Assume $I_H$ is an $L^2$-projection, we have
\begin{equation}
\begin{gathered}
   \|v^{N}_h\|^2-\|v^0_h\|^2 +\sum_{n=0}^{N-1} \|v^{n+1}_h -v^{n}_h\|^2
    + \Delta t \sum_{n=0}^{N-1} \big[\nu \|\nabla v^{n+1}_h\|^2+\chi\|I_H(v^{n+1}_h)\|^2 + \\
    \alpha_1\|q^{n+1}_h\|^2\big]
    \leq \Delta t \sum_{n=0}^{N-1}\big[\frac{1}{\nu}\|f(t_{n+1})\|^2_{-1} +\mu_1\|I_H(p(t_{n+1}))\|^2 + \chi \|I_H(u(t_{n+1}))\|^2 \big],
    \end{gathered}
\end{equation}
where $\alpha_1 = \min\{\mu_1, 2\mu_2\}$.
\end{theorem}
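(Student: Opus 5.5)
Our plan is to mimic the proof of the semi-discrete stability estimate \eqref{stability}, replacing the time derivative by the backward Euler difference quotient. In \eqref{BE_momentum} we take $w^h = v_h^{n+1}$, and in \eqref{fully-conti} we take $\lambda^h = q_h^{n+1}$, and then add the two equations. The pressure terms $-(q_h^{n+1},\nabla\cdot v_h^{n+1})$ and $(\nabla\cdot v_h^{n+1},q_h^{n+1})$ cancel. The linearized nonlinear term $b^*(v_h^n,v_h^{n+1},v_h^{n+1})$ vanishes by skew-symmetry of $b^*$; this is why the linearization is harmless for stability.

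For the time difference we use the polarization identity
\[
(v_h^{n+1}-v_h^n, v_h^{n+1}) = \tfrac12\bigl(\|v_h^{n+1}\|^2 - \|v_h^n\|^2 + \|v_h^{n+1}-v_h^n\|^2\bigr).
\]
Since $I_H$ is an $L^2$ projection, we have $(I_H v_h^{n+1}, v_h^{n+1}) = \|I_H v_h^{n+1}\|^2$ and $(I_H u(t_{n+1}), v_h^{n+1}) = (I_H u(t_{n+1}), I_H v_h^{n+1})$. The same holds for the pressure terms. Orthogonality also gives
\[
\mu_1\|I_H q_h^{n+1}\|^2 + \mu_2\|q_h^{n+1}-I_H q_h^{n+1}\|^2
\]
on the left-hand side, exactly as in the semi-discrete proof.

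On the right-hand side we bound $(f^{n+1}_h, v_h^{n+1}) \le \frac{\nu}{2}\|\nabla v_h^{n+1}\|^2 + \frac{1}{2\nu}\|f(t_{n+1})\|_{-1}^2$. We treat the nudging data terms with the same polarization identities as before:
\[
\chi(I_Hu, I_Hv) \le \tfrac{\chi}{2}\|I_Hu\|^2 + \tfrac{\chi}{2}\|I_Hv\|^2, \qquad \mu_1(I_Hp, I_Hq) \le \tfrac{\mu_1}{2}\|I_Hp\|^2 + \tfrac{\mu_1}{2}\|I_Hq\|^2 .
\]
Both are absorbed into the left-hand side, leaving $\frac{\chi}{2}\|I_H v_h^{n+1}\|^2$ and $\frac{\mu_1}{2}\|I_H q_h^{n+1}\|^2$ there. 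After multiplying by $2\Delta t$, we use $\|q\|^2 = \|I_Hq\|^2 + \|q-I_Hq\|^2$ to write
\[
\mu_1\|I_Hq_h^{n+1}\|^2 + 2\mu_2\|q_h^{n+1}-I_Hq_h^{n+1}\|^2 \ge \alpha_1\|q_h^{n+1}\|^2 .
\]
This produces the one-step inequality
\[
\|v_h^{n+1}\|^2 - \|v_h^n\|^2 + \|v_h^{n+1}-v_h^n\|^2 + \Delta t\bigl[\nu\|\nabla v_h^{n+1}\|^2 + \chi\|I_Hv_h^{n+1}\|^2 + \alpha_1\|q_h^{n+1}\|^2\bigr] \le \Delta t\bigl[\tfrac1\nu\|f\|_{-1}^2 + \mu_1\|I_Hp\|^2 + \chi\|I_Hu\|^2\bigr],
\]
with all data evaluated at $t_{n+1}$.

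Summing from $n=0$ to $N-1$ telescopes the $\|v_h^n\|^2$ terms and gives the claim. No Gronwall or Emmrich-type argument is needed. The only step requiring care is the bookkeeping of constants, so that exactly the coefficients $\chi$, $\alpha_1$ and $\nu$ appear. We also need to interpret $f_h^{n+1}$ as $f(t_{n+1})$ (or bound it by it). We expect no genuine obstacle.
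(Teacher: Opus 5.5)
Your proposal is correct and follows the paper's proof step for step. Both test with $(v_h^{n+1},q_h^{n+1})$, apply the polarization identity to the difference quotient, use $L^2$-projection orthogonality and Young's inequality on the $f$, $\chi$ and $\mu_1$ data terms, split $\|q_h^{n+1}\|^2=\|I_Hq_h^{n+1}\|^2+\|q_h^{n+1}-I_Hq_h^{n+1}\|^2$ to obtain $\alpha_1$, and finally multiply by $2\Delta t$ and telescope. Your summation range $n=0,\dots,N-1$ is the right one; the paper's stated $n=1$ is a typo. Your remarks that $b^*(v_h^n,v_h^{n+1},v_h^{n+1})=0$ and that $f_h^{n+1}$ is read as $f(t_{n+1})$ make explicit steps the paper leaves implicit.
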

\begin{proof}
Take $w^h=v^{n+1}_h$, $\lambda^h=q^{n+1}_h$ in (\ref{BE_momentum}) and (\ref{fully-conti}), and add (\ref{BE_momentum}) and (\ref{fully-conti}), we have
\begin{equation}
\begin{gathered}
    \frac{1}{\Delta t}\left(v^{n+1}_h-v^{n}_h, v^{n+1}_h \right)+ \nu \|\nabla v^{n+1}_h\|^2 +\chi (I_H(v^{n+1}_h), v^{n+1}_h) \\
    +\mu_1 (I_H(q^{n+1}_h), q^{n+1}_h) + \mu_2 (q^{n+1}_h- I_H(q^{n+1}_h), q^{n+1}_h)\\
    = (f(t_{n+1}), v^{n+1}_h)+\mu_1 (I_H(p(t_{n+1})), q^{n+1}_h) +\chi (I_H(u(t_{n+1})), v^{n+1}_h).\label{BE-IMEX_start}
    \end{gathered}
\end{equation}
First, we apply the polarization identity to $\frac{1}{\Delta t}\left(v^{n+1}_h-v^{n}_h, v^{n+1}_h \right)$, then
\begin{equation}
    \frac{1}{\Delta t}\left(v^{n+1}_h-v^{n}_h, v^{n+1}_h \right)=\frac{1}{2\Delta t}\left(\|v^{n+1}_h\|^2 - \|v^{n}_h\|^2 + \|v^{n+1}_h-v^{n}_h\|^2 \right).
\end{equation}
Since $I_H$ is an $L^2$-projection, 
\begin{equation}
\begin{gathered}
    \mu_2 (q^{n+1}_h - I_H (q^{n+1}_h), q^{n+1}_h)
    =\mu_2 \|I_H(q^{n+1}_h)-q^{n+1}_h\|^2.
    \end{gathered}
\end{equation}
\begin{equation}
\begin{gathered}
    (f(t_{n+1}), v^{n+1}_h)\leq \frac{1}{2\nu}\|f(t_{n+1})\|^2_{-1}+ \frac{\nu}{2}\|\nabla v^{n+1}_h\|^2,\\
     \chi (I_H(u(t_{n+1})), v^{n+1}_h) 
     \leq \frac{\chi}{2} \|I_H(u(t_{n+1}))\|^2 + \frac{\chi}{2} \|I_H(v^{n+1}_h)\|^2,\\
    \mu_1 (I_H(p(t_{n+1})), q^{n+1}_h)
    \leq \frac{\mu_1}{2}\|I_H(p(t_{n+1}))\|^2 + \frac{\mu_1}{2}\|I_H(q^{n+1}_h))\|^2. \label{pq_inequalty}
    \end{gathered}
\end{equation}
Thus, we have
\begin{equation}
\begin{split}
\frac{1}{2\Delta t}\left(\|v^{n+1}_h\|^2 - \|v^{n}_h\|^2 + \|v^{n+1}_h-v^{n}_h\|^2 \right)
     +\frac{\nu}{2} \|\nabla v^{n+1}_h\|^2 + 
     \frac{\chi}{2} \|I_H(v^{n+1}_h)\|^2
     \\
     +\frac{1}{2} \alpha_1 \|q^{n+1}_h\|^2
     \leq \frac{1}{2\nu} \|f(t_{n+1})\|^2_{-1} + \frac{\mu_1}{2} \|I_H(p(t_{n+1}))\|^2 +\frac{\chi}{2}\|I_H(u(t_{n+1}))\|^2.\label{stability-before-sum-be}
     \end{split}
\end{equation}
Summing over (\ref{stability-before-sum-be}) from $n=1$ to $n=N-1$, and multiplying it by $2\Delta t$, we have the stability.
\end{proof}
\begin{theorem}[BDF2-IMEX stability] Assume $I_H$ is an $L^2$-projection.
\begin{equation}
\begin{gathered}
   \|v^{N}_h\|^2 + \|2v^N_h-v^{N-1}_h\|^2     -\|v^0_h\|^2 - \|2 v^1_h-v^0_h\|^2 +\sum_{n=0}^{N-1} \|v^{n+1}_h -2v^{n}_h+v^{n-1}_h\|^2\\
    + 2\Delta t \sum_{n=0}^{N-1} \bigg[\nu \|\nabla v^{n+1}_h\|^2+\chi\|I_H(v^{n+1}_h)\|^2 + \alpha_1 \|q^{n+1}_h\|^2\bigg]\\
    \leq 2\Delta t \sum_{n=0}^{N-1}\bigg[\frac{1}{\nu}\|f(t_{n+1})\|^2_{-1} +\mu_1\|I_H(p(t_{n+1}))\|^2 + \chi \|I_H(u(t_{n+1}))\|^2 \bigg],
    \end{gathered}
\end{equation}
where $\alpha_1 = $ $\min\{\mu_1, 2\mu_2\}$.
\end{theorem}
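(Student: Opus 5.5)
We follow the proof of Theorem~\ref{BE_stability}. Take $w^h=v^{n+1}_h$ in (\ref{fully-momen}) and $\lambda^h=q^{n+1}_h$ in (\ref{fully-conti}), and add the two equations. The pressure coupling terms $\pm(q^{n+1}_h,\nabla\cdot v^{n+1}_h)$ cancel. The nonlinear term vanishes because $b^*(2v^n_h-v^{n-1}_h,v^{n+1}_h,v^{n+1}_h)=0$ by skew-symmetry. This is why the IMEX extrapolation causes no difficulty, exactly as in the backward Euler case. What remains is
\begin{equation*}
\begin{gathered}
\frac{1}{2\Delta t}\left(3v^{n+1}_h-4v^n_h+v^{n-1}_h,v^{n+1}_h\right)+\nu\|\nabla v^{n+1}_h\|^2+\chi\|I_H(v^{n+1}_h)\|^2
+\mu_1\|I_H(q^{n+1}_h)\|^2\\
+\mu_2\|q^{n+1}_h-I_H(q^{n+1}_h)\|^2
=(f(t_{n+1}),v^{n+1}_h)+\chi(I_H(u(t_{n+1})),v^{n+1}_h)+\mu_1(I_H(p(t_{n+1})),q^{n+1}_h).
\end{gathered}
\end{equation*}
Here the $L^2$-projection property of $I_H$ is used as before. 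It gives $(I_Hv,v)=\|I_Hv\|^2$, and it also handles the $\mu_2$ term.

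The key step is the standard BDF2 (G-stability) identity
\begin{equation*}
\begin{gathered}
\left(3a-4b+c,a\right)=\frac{1}{2}\left(\|a\|^2+\|2a-b\|^2\right)-\frac{1}{2}\left(\|b\|^2+\|2b-c\|^2\right)+\frac{1}{2}\|a-2b+c\|^2,
\end{gathered}
\end{equation*}
applied with $a=v^{n+1}_h$, $b=v^n_h$, $c=v^{n-1}_h$. This should be checked by direct expansion. With it, the time-derivative term becomes $\frac{1}{4\Delta t}$ times a telescoping difference of the quantity $\|v^{n+1}_h\|^2+\|2v^{n+1}_h-v^n_h\|^2$, plus the numerical dissipation $\frac{1}{4\Delta t}\|v^{n+1}_h-2v^n_h+v^{n-1}_h\|^2$.

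The right-hand side is bounded exactly as in (\ref{pq_inequalty}). Half of each dissipative term is absorbed: the source term by $\frac{\nu}{2}\|\nabla v^{n+1}_h\|^2+\frac1{2\nu}\|f\|_{-1}^2$, and the nudging terms by $\frac{\chi}{2}\|I_H(v^{n+1}_h)\|^2$ and $\frac{\mu_1}{2}\|I_H(q^{n+1}_h)\|^2$. For the last of these, note that $(I_Hp,q^{n+1}_h)=(I_Hp,I_Hq^{n+1}_h)$. Then $\|q^{n+1}_h\|^2=\|I_Hq^{n+1}_h\|^2+\|q^{n+1}_h-I_Hq^{n+1}_h\|^2$, which gives the lower bound $\frac{\alpha_1}{2}\|q^{n+1}_h\|^2$ for the pressure terms.

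Finally, multiply by $4\Delta t$ and sum over the time steps. The telescoping leaves the boundary terms $\|v^N_h\|^2+\|2v^N_h-v^{N-1}_h\|^2$ minus the initial pair, which gives the stated inequality with the factor $2\Delta t$ on both sides. I do not expect a real obstacle. The only care needed is in the index bookkeeping: the sum really starts at $n=1$, since BDF2 needs $v^0_h,v^1_h$, and this matches the initial terms $\|v^0_h\|^2+\|2v^1_h-v^0_h\|^2$ in the statement. The constants should also be tracked so that the factor 2 matches.
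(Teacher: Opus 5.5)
Your argument is the paper's proof. You test with $(v^{n+1}_h,q^{n+1}_h)$, drop the convective term by skew-symmetry, and use the three-group polarization (your identity is exactly (\ref{time-inequality})). You then bound the data terms as in Theorem~\ref{BE_stability}, multiply by $4\Delta t$ and sum. Your per-step pressure bound $\frac{\alpha_1}{2}\|q^{n+1}_h\|^2$ is the correct one. The paper's display (\ref{stability-before-sum}) writes $\alpha_1$, but only $\frac{\alpha_1}{2}$ is consistent with the factor $2\Delta t\,\alpha_1$ in the statement.

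Your final bookkeeping claim, however, is wrong. Summing from $n=1$ to $N-1$ telescopes to $\bigl(\|v^N_h\|^2+\|2v^N_h-v^{N-1}_h\|^2\bigr)-\bigl(\|v^1_h\|^2+\|2v^1_h-v^0_h\|^2\bigr)$. So the subtracted pair contains $\|v^1_h\|^2$, not $\|v^0_h\|^2$. Moreover, every sum (dissipation, numerical dissipation, data) runs over $n=1,\dots,N-1$. This does not ``match'' the printed statement, whose $n=0$ terms would even involve an undefined $v^{-1}_h$. The mis-indexing is in the statement itself, and the paper's proof shares it, since it also sums from $n=1$.

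You should therefore either state what you actually prove (the version with $\|v^1_h\|^2$ and sums from $n=1$), or supply a separate estimate for the starting value. For example, if $v^1_h$ is produced by one backward Euler step, adding Theorem~\ref{BE_stability} with $N=1$ replaces $-\|v^1_h\|^2$ by $-\|v^0_h\|^2$ and provides the $n=0$ terms. In that version the $n=0$ increment is $\|v^1_h-v^0_h\|^2$, and the $n=0$ terms on the left carry weight $\Delta t$ rather than $2\Delta t$.
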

\begin{proof}
Take $w^h=v^{n+1}_h$, $\lambda^h=q^{n+1}_h$ in (\ref{fully-momen})--(\ref{fully-conti}), and add (\ref{fully-momen})--(\ref{fully-conti}), we have
\begin{equation}
\begin{split}
    \left( \frac{3 v^{n+1}_h - 4v^n_h +v^{n-1}_h}{2 \Delta t }, v^{n+1}_h \right)+ \nu \|\nabla v^{n+1}_h\|^2 +\chi (I_H(v^{n+1}_h), v^{n+1}_h) \\
    +\mu_1 (I_H(q^{n+1}_h), q^{n+1}_h) + \mu_2 (q^{n+1}_h- I_H(q^{n+1}_h), q^{n+1}_h)\\
    = (f(t_{n+1}), v^{n+1}_h)+\mu_1 (I_H(p(t_{n+1})), q^{n+1}_h) +\chi (I_H(u(t_{n+1})), v^{n+1}_h).\label{BDF2-IMEX_start}
    \end{split}
\end{equation}
First, we bound $\frac{1}{2\Delta t} \left(3 v^{n+1}_h - 4v^n_h+v^{n-1}_h, v^{n+1}_h \right)$:
\begin{equation}
    3 v^{n+1}_h - 4 v^{n}_h+ v^{n-1}_h= (v^{n+1}_h - v^{n}_h) + (2v^{n+1}_h - v^{n}_h) - (2v^{n}_h-v^{n-1}_h).
\end{equation}
Using the polarization identity to each of the three groups gives,  and multiplying by $\frac{1}{2\Delta t}$, we have
\begin{equation}
\begin{gathered}
     (\frac{3 v^{n+1}_h - 4v^n_h +v^{n-1}_h}{2 \Delta t }, v^{n+1}_h ) = \frac{1}{4 \Delta t } \big[\|v^{n+1}_h\|^2 -\|v^{n}_h\|^2 \\
     + \|2v^{n+1}_h-v^n_h\|^2 - \|2v^{n}_h-v^{n-1}_h\|^2 +\|v^{n+1}_h - 2v^{n}_h + v^{n-1}_h\|^2 \big].\label{time-inequality}
     \end{gathered}
\end{equation}
We estimate 
\begin{equation}
\begin{gathered}
      \mu_2 (q_h^{n+1}-I_H(q_h^{n+1}),q_h^{n+1}), 
\chi(I_H(v_h^{n+1}),v_h^{n+1}),\\
\chi(I_H(u(t_{n+1})),v_h^{n+1}), (f^{n+1},v_h^{n+1}),
\mu_1(I_H(p(t_{n+1})),q_h^{n+1}),  
\end{gathered}
\end{equation}
in the same way as in Theorem~\ref{BE_stability}. Thus (\ref{BDF2-IMEX_start}) becomes
\begin{equation}
\begin{split}
\frac{1}{4 \Delta t } \big[\|v^{n+1}_h\|^2 -\|v^{n}_h\|^2 
     + \|2v^{n+1}_h-v^n_h\|^2 - \|2v^{n}_h-v^{n-1}_h\|^2 \\
     +\|v^{n+1}_h - 2v^{n}_h + v^{n-1}_h\|^2 \big]
     +\frac{\nu}{2} \|\nabla v^{n+1}_h\|^2 + 
     \frac{\chi}{2} \|I_H(v^{n+1}_h)\|^2
     + \alpha_1 \|q^{n+1}_h\|^2 \\
     \leq \frac{1}{2\nu} \|f(t_{n+1})\|^2_{-1} + \frac{\mu_1}{2}\|I_H(p(t_{n+1}))\|^2 +\frac{\chi}{2}\|I_H(u(t_{n+1}))\|^2.\label{stability-before-sum}
     \end{split}
\end{equation}
Summing over (\ref{stability-before-sum}) from $n=1$ to $n=N-1$, and multiplying it by $4\Delta t$, we have the stability.
\end{proof}

\section{Error estimates}\label{error-estimates}

The continuous error estimate was analyzed in \cite{CibikFang26} under suitable regularity and parameter assumptions. In this work, we focus on the numerical analysis of finite element approximations of the proposed model. Section \ref{semi-discrete-error} presents the semi-discrete error analysis, where the problem is continuous in time and discretized only in space by finite elements. Section \ref{full-discrete-error} establishes the fully discrete error estimate for the linearized backward Euler scheme. Throughout this section, the explicit Sobolev constants appearing in the analysis are those corresponding to the three-dimensional case. The regularity assumptions \eqref{smoothAssum-semidiscrete} and \eqref{smoothAssum-fullydiscrete} are therefore stated for $d=3$; the corresponding two-dimensional statements are obtained by using the sharper constants of Lemma~\ref{norms_property}, and all estimates remain valid with the same rates.

Our analysis establishes an infinite predictability horizon: the effect of the initial error decays exponentially in time at a rate of $\mathcal{O}(\min\{\chi,\mu_1\})$, while the model error is bounded by $\mathcal{O}(H,\mu_1^{-1/2})$. Thus, balancing these two contributions suggests choosing $\mu_1=\mathcal{O}(H^{-2})$ to obtain the optimal convergence rate.

\subsection{Semi-discrete error estimates}\label{semi-discrete-error} 
Define the velocity error $e^h=u-v^h=\eta-\phi^h$, where $\eta = u-\Tilde{v}$, $\phi^h= v^h-\Tilde{v}$, $\forall \Tilde{v}\in X^h$, and the pressure error $e^h_p=p-q^h =\eta_p-\phi^h_p$, where $\eta_p = p-\Tilde{q}$ and $\phi^h_p= q^h-\Tilde{q}$, $\forall \Tilde{q}\in Q^h$.
\begin{theorem} We assume 
\begin{equation}
\begin{split}
    \chi-\frac{27}{16}\frac{C_2^4}{\nu^3}\|\nabla v^h\|^4_{L^\infty(0,T; L^2(\Omega))}\geq \alpha \chi,\quad
    \frac{\nu}{16}- \chi C_1^2H^2\geq 0, \quad \text{and} \quad \mu_1\geq \mu_2,\\
    \end{split}
\label{assumptions}
\end{equation}
and the following regularity assumptions:
\begin{equation}\label{smoothAssum-semidiscrete}
\begin{gathered}
u \in L^\infty(0,T;(H^{m+1}(\Omega))^3)
    \cap L^4(0,T; (H^{m+1}(\Omega))^3),\
u_t \in L^2(0,T; (H^{m+1}(\Omega))^3),\\
p \in L^\infty(0,T;H^{m}(\Omega))
    \cap L^4(0,T;W^{1,3}(\Omega)),
p_t \in L^2(0,T;H^{m}(\Omega)),\\
q^h_t \in L^2(0,T;L^2(\Omega)).
\end{gathered}
\end{equation}
Then, we have the following error estimate:
\allowdisplaybreaks
\begin{align}
    &\quad \frac{1}{2}\|e^h\|^2 +\frac{1}{2c^2}\|e^h_p\|^2
    +\frac{\nu}{16}\int_{0}^t \exp [-\beta (t-t')]\|\nabla e^h\|^2 \, dt'
    \nonumber\\
    &\leq
    2\exp [-\beta t]
    \big[\|e^h(0)\|^2+\frac{1}{c^2}\|e^h_p(0)\|^2 \big]
    +\frac{(\mu_1+4)}{\beta} C_1^2 H^2
    \max_{0\leq t \leq T}\|\nabla p\|^2
    \nonumber\\
    &\quad
    +\big[8\frac{\mu_1}{\beta}+3\big]
    \max_{0\leq t\leq T}\|\eta_p\|^2
    +\big[\frac{\chi}{\beta}+3\big]
    \max_{0\leq t\leq T}\|\eta\|^2
    +4C_1^2 H^2
    \|\nabla \eta_p\|^2_{L^2(0,T;L^2(\Omega))}
    \nonumber\\
    &\quad
    +\frac{4}{\nu}M^2
    \Big[
    \|\nabla u\|^2_{L^4(0,T;L^2(\Omega))}
    +\|\nabla v^h\|^2_{L^4(0,T;L^2(\Omega))}
    \Big]
    \|\nabla \eta\|^2_{L^4(0,T;L^2(\Omega))}
    \nonumber\\
    &\quad
    +\frac{6}{\nu}
    \|\eta_p\|^2_{L^2(0,T;L^2(\Omega))}
    +\frac{16}{\mu_1 c^4}
    \|\eta_{p,t}\|^2_{L^2(0,T;L^2(\Omega))}
    \nonumber\\
    &\quad
    +\Big[
    \frac{16}{\mu_1}
    +\frac{8}{\nu}
    +\frac{7\nu}{12}
    \Big]
    \|\nabla \eta\|^2_{L^2(0,T;L^2(\Omega))}
    +\frac{8}{\nu}
    \int_{0}^T \|\eta_t\|^2_{-1}\,dt
    \nonumber\\
    &\quad
    +\frac{16}{\mu_1 c^4}
    \Big[
    \|q^h_t\|^2_{L^2(0,T;L^2(\Omega))}
    +\frac{2}{3}
    \|\nabla u\|^4_{L^4(0,T;L^2(\Omega))}
    +\frac{2}{3}
    \|\nabla p\|^4_{L^4(0,T;L^3(\Omega))}
    \Big],
    \label{error-estimate-result}
\end{align}
where $\beta=\min\{\alpha \chi, \frac{c^2\mu_1}{16}\}$.
\begin{corollary}
Under the regularity assumptions~(\ref{smoothAssum-semidiscrete}), the parameter assumptions~(\ref{assumptions}), and the approximation properties~(\ref{prop}), the following estimate holds:
\allowdisplaybreaks
\begin{align}
&\frac{1}{2}\|e^h\|^2
+\frac{1}{2c^2}\|e^h_p\|^2
+\frac{\nu}{16}\int_{0}^t
\exp[-\beta (t-t')]
\|\nabla e^h\|^2\,dt'
\nonumber\\
&\leq
2\exp[-\beta t]
\left[
\|e^h(0)\|^2
+\frac{1}{c^2}\|e^h_p(0)\|^2
\right]
+\frac{(\mu_1+4)}{\beta}C_1^2H^2
\max_{0\le t\le T}\|\nabla p\|^2
\nonumber\\
&\quad
+\left[
8\frac{\mu_1}{\beta}+3
\right]
Ch^{2m}
\max_{0\le t\le T}|p|_{H^m}^2
+\left[
\frac{\chi}{\beta}+3
\right]
Ch^{2m+2}
\max_{0\le t\le T}|u|_{H^{m+1}}^2
\nonumber\\
&\quad
+4C_1^2H^2
Ch^{2m-2}
|p|^2_{L^2(0,T;H^m(\Omega))}
+\left[
\frac{16}{\mu_1}
+\frac{8}{\nu}
+\frac{7\nu}{12}
\right]
Ch^{2m}
|u|^2_{L^2(0,T;H^{m+1}(\Omega))}
\nonumber\\
&\quad
+\frac{4}{\nu}M^2
\left[
\|\nabla u\|^2_{L^4(0,T;L^2(\Omega))}
+\|\nabla v^h\|^2_{L^4(0,T;L^2(\Omega))}
\right]
Ch^{2m}
|u|^2_{L^4(0,T;H^{m+1}(\Omega))}
\nonumber\\
&\quad
+\frac{6}{\nu}
Ch^{2m}
|p|^2_{L^2(0,T;H^m(\Omega))}
+\frac{16}{\mu_1c^4}
Ch^{2m}
|p_t|^2_{L^2(0,T;H^m(\Omega))}
+\frac{8}{\nu}
Ch^{2m+2}
|u_t|^2_{L^2(0,T;H^{m+1}(\Omega))}
\nonumber\\
&\quad
+\frac{16}{\mu_1c^4}
\left[
\|q_t^h\|^2_{L^2(0,T;L^2(\Omega))}
+\frac{2}{3}\|\nabla u\|^4_{L^4(0,T;L^2(\Omega))}
+\frac{2}{3}\|\nabla p\|^4_{L^4(0,T;L^3(\Omega))}
\right]
\nonumber,
\label{error-estimate-result-after-prop}
\end{align}
where $\beta=\min\{\alpha \chi, \frac{c^2\mu_1}{16}\}$.
\end{corollary}

\begin{corollary}
Under the regularity assumptions~(\ref{smoothAssum-semidiscrete}), the parameter assumptions~(\ref{assumptions}), the approximation properties~(\ref{prop}), and the choice $\mu_1=\mathcal{O}(H^{-2})$, the following estimate holds:
\begin{equation}
\begin{gathered}
   \frac{1}{2} \|e^h\|^2 +\frac{1}{2c^2}\|e^h_p\|^2 +\frac{\nu}{16}\int_{0}^t \exp[-\beta (t-t')]\|\nabla e^h\|^2 \, dt' \\
\leq 2\exp[-\beta t] \big[\|e^h(0)\|^2+\frac{1}{c^2}\|e^h_p(0)\|^2 \big]+
C\big[ H^2+h^{2m}+ H^2h^{2m-2}+\frac{H^2}{c^4}\big],
\label{error-estimate-result-after-prop-chi-mu_1}
\end{gathered}
\end{equation}
where $\beta=\min\{\alpha \chi, \frac{c^2\mu_1}{16}\}$.
\end{corollary}
\end{theorem}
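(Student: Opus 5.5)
The plan is to run an energy argument on the error equation, splitting the error as $e^h=\eta-\phi^h$ and $e^h_p=\eta_p-\phi^h_p$, and testing with the discrete parts $\phi^h,\phi^h_p$.

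\textbf{Step 1: error equations.} Take $(w,\lambda)=(w^h,\lambda^h)\in(X^h,Q^h)$ in \eqref{weak-compre-mom}--\eqref{weak-compre-conti}. Add and subtract $\chi(I_H(u-u),w^h)=0$ and the nudging terms, and subtract \eqref{nudged_time_conti_mom}--\eqref{nudged_time_conti_conti}. In the continuity equation, write the compressibility term $\frac1{c^2}(p_t,\lambda^h)$ as
\[
\frac1{c^2}(e^h_{p,t},\lambda^h)+\frac1{c^2}(q^h_t,\lambda^h).
\]
This creates the time-derivative term that yields $\frac1{2c^2}\|e^h_p\|^2$. It also explains why $\|q^h_t\|$ appears on the right-hand side of \eqref{error-estimate-result}.

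The resulting system has the following parts:
\begin{itemize}
\item the momentum error equation, with consistency remainder $\frac{\nu}{3}(\nabla\cdot u,\nabla\cdot w^h)$;
\item the nonlinear difference $b^*(u,u,w^h)-b^*(v^h,v^h,w^h)=b^*(e^h,u,w^h)+b^*(v^h,e^h,w^h)$;
\item the continuity error equation, containing $\mu_1(I_H e^h_p,\lambda^h)$, the $\mu_2$ term rewritten via $q^h=p-e^h_p$, and the remainders $\frac1{c^2}(u\cdot\nabla p,\lambda^h)$ and $\frac1{c^2}(q^h_t,\lambda^h)$.
\end{itemize}

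\textbf{Step 2: testing and handling the terms.} Set $w^h=\phi^h$, $\lambda^h=\phi^h_p$ and add the two equations, so that the coupling terms $\pm(\phi^h_p,\nabla\cdot\phi^h)$ cancel. The terms are then handled as follows.

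\begin{itemize}
\item \emph{Velocity nudging.} Split
\[
\chi(I_H\phi^h,\phi^h)=\chi\|\phi^h\|^2-\chi(\phi^h-I_H\phi^h,\phi^h).
\]
Bound the second piece by A1, using $\frac{\nu}{16}-\chi C_1^2H^2\ge0$ so that it is absorbed into the viscous term.
\item \emph{Pressure nudging.} Since $I_H$ is an $L^2$ projection, the pressure terms give $\mu_1\|I_H\phi^h_p\|^2+\mu_2\|\phi^h_p-I_H\phi^h_p\|^2$. The assumption $\mu_1\ge\mu_2$ then lets one extract a coercive $\mu_2$-multiple of $\|\phi^h_p\|^2$. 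The leftover interpolation pieces $(I-I_H)p$ are bounded by $C_1H\|\nabla p\|$, and likewise for $\eta_p$. These produce the $(\mu_1+4)C_1^2H^2\|\nabla p\|^2$ and $C_1^2H^2\|\nabla\eta_p\|^2$ terms.
\item \emph{Nonlinear terms.} Handle $b^*(\phi^h,v^h,\phi^h)$ with Lemma~\ref{nonlinear_bound_John} and Young's inequality with exponents $4$ and $4/3$. This gives $\frac{\nu}{4}\|\nabla\phi^h\|^2+\frac{27}{16}\frac{C_2^4}{\nu^3}\|\nabla v^h\|^4\|\phi^h\|^2$, which is absorbed by $\chi\|\phi^h\|^2$ through \eqref{assumptions}, leaving $\alpha\chi\|\phi^h\|^2$. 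The $\eta$-terms are bounded by $M\|\nabla u\|\|\nabla\eta\|\|\nabla\phi^h\|$ and its $v^h$ analogue.
\item \emph{Remainder terms.} The terms involving $\eta_t$, $\eta_{p,t}$, $q^h_t$ and $u\cdot\nabla p$ are placed against $\mu_1\|\phi^h_p\|^2$ with weight $1/(\mu_1c^4)$. For the convective remainder use
\[
(u\cdot\nabla p,\phi^h_p)\le\|u\|_{L^6}\|\nabla p\|_{L^3}\|\phi^h_p\|
\]
together with Lemma~\ref{norms_property}, and Young's inequality gives the fourth-power terms. The term $\frac{\nu}{3}(\nabla\cdot u,\nabla\cdot\phi^h)$ is split through $u=e^h+v^h$, and the remaining pieces are bounded by $\nu$-multiples of $\|\nabla\eta\|^2$ and $\|\nabla\phi^h\|^2$.
\end{itemize}

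\textbf{Step 3: Gronwall with integrating factor.} The outcome is a differential inequality
\[
\frac{d}{dt}\Big(\tfrac12\|\phi^h\|^2+\tfrac1{2c^2}\|\phi^h_p\|^2\Big)+\beta\Big(\cdots\Big)+\tfrac{\nu}{16}\|\nabla\phi^h\|^2\le R(t),
\]
with $\beta=\min\{\alpha\chi,c^2\mu_1/16\}$. Multiply by $e^{\beta t}$ and integrate over $(0,t)$. For the $\max$-type terms, use $\int_0^te^{-\beta(t-s)}\,ds\le1/\beta$, which produces the $\mu_1/\beta$ and $\chi/\beta$ factors. Finally, use the triangle inequality $e^h=\eta-\phi^h$, which accounts for the constants $2$ and $3$ in \eqref{error-estimate-result}.

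The first corollary then follows by inserting \eqref{prop} into each $\eta,\eta_p$ norm. The second follows from $\mu_1=\mathcal O(H^{-2})$: then $\mu_1/\beta$ is bounded, the $\mu_1H^2/\beta$ term is $\mathcal O(H^2)$, and every $1/\mu_1$ term is $\mathcal O(H^2)$.

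The main obstacle is Step 2 for the pressure equation. A coercive $\|\phi^h_p\|^2$ term must be extracted while only $I_H(p-q^h)$ is nudged, and the $\mu_2$ stabilization acts on $q^h$ rather than on the error. Getting the $H^2\|\nabla p\|^2$ contributions with exactly the $\mu_1/\beta$ scaling requires carefully using the $L^2$-orthogonality of $I_H$ together with $\mu_1\ge\mu_2$.
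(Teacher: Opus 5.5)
Your overall route is the paper's: split $e^h=\eta-\phi^h$ and $e^h_p=\eta_p-\phi^h_p$, test with $(\phi^h,\phi^h_p)$, rewrite $\frac{1}{c^2}(p_t,\phi^h_p)$ through $q^h_t$, apply Young's inequality, the integrating factor $e^{\beta t}$ and the triangle inequality. However, the pressure step has a genuine gap.

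You correctly compute that the part of $-\mu_1(I_H(p-q^h),\phi^h_p)-\mu_2(I_Hq^h-q^h,\phi^h_p)$ that is quadratic in $\phi^h_p$ equals $\mu_1\|I_H\phi^h_p\|^2+\mu_2\|(I-I_H)\phi^h_p\|^2$. So under $\mu_1\ge\mu_2$ you control only $\mu_2\|\phi^h_p\|^2$: the fine-scale pressure error is damped at rate $\mu_2$, and not at all if $\mu_2=0$. You nevertheless absorb $(\nabla\cdot\eta,\phi^h_p)$ and the $\eta_{p,t}$, $q^h_t$, $u\cdot\nabla p$ terms into $\mu_1\|\phi^h_p\|^2$ with weights $1/\mu_1$ and $1/(\mu_1c^4)$, and take $\beta=\min\{\alpha\chi,c^2\mu_1/16\}$. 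Nothing in your argument provides this.

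The paper obtains $\frac{\mu_1}{8}\|\phi^h_p\|^2$ by regrouping as $-\mu_1(I_Hp-q^h,\phi^h_p)-(\mu_1-\mu_2)((I-I_H)q^h,\phi^h_p)$. The first piece yields $\mu_1\|\phi^h_p\|^2$, and the $\phi^h_p$-part of the second is asserted to be $+(\mu_1-\mu_2)(\|\phi^h_p\|^2-\|I_H\phi^h_p\|^2)\ge 0$. Since $q^h-p=\phi^h_p-\eta_p$, that part is actually $-(\mu_1-\mu_2)\|(I-I_H)\phi^h_p\|^2\le 0$. The two pieces therefore sum to exactly your $\mu_1\|I_H\phi^h_p\|^2+\mu_2\|(I-I_H)\phi^h_p\|^2$. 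To check, take $p=\tilde q=0$ and $q^h$ orthogonal to the range of $I_H$: the left side of the paper's bound is $\mu_2\|q^h\|^2$, which is not $\ge\frac{\mu_1}{8}\|q^h\|^2$.

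So you cannot close the gap along the paper's lines. What your correct coercivity supports is the estimate with $\min\{\mu_1,\mu_2\}=\mu_2$ in place of $\mu_1$ in $\beta$ and in the weights of the terms absorbed against $\|\phi^h_p\|^2$. Coercivity in $\mu_1$ would need the reversed hypothesis $\mu_2\ge\mu_1$. The paper's $\mu_2/\mu_1$ sweep points the same way: at $\mu_2=0$ the pressure error does not improve when $\mu_1$ doubles, and it is smallest near $\mu_2/\mu_1\approx 1.5$--$2$.

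Two further points in Step 2 also fail as written. First, after splitting $u=e^h+v^h$ in $\frac{\nu}{3}(\nabla\cdot u,\nabla\cdot\phi^h)$, the piece $\frac{\nu}{3}(\nabla\cdot v^h,\nabla\cdot\phi^h)$ is not bounded by $\|\nabla\eta\|$ and $\|\nabla\phi^h\|$. Substituting $v^h=u-\eta+\phi^h$ just returns $\frac{\nu}{3}(\nabla\cdot u,\nabla\cdot\phi^h)$, and $\nabla\cdot u=-c^{-2}(p_t+u\cdot\nabla p)$ is a genuine model-mismatch term. It must be estimated directly, for example by $\frac{\nu}{3}(\nabla\cdot u,\nabla\cdot\phi^h)=-\frac{\nu}{3}(\nabla(\nabla\cdot u),\phi^h)\le\frac{\alpha\chi}{4}\|\phi^h\|^2+\frac{\nu^2}{9\alpha\chi}\|\nabla(\nabla\cdot u)\|^2$. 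This adds a term that is absent from the stated right-hand side. The paper's error equation simply omits it, as if the nudged model contained $-\frac{\nu}{3}\nabla(\nabla\cdot v)$.

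Second, your splitting $b^*(e^h,u,\cdot)+b^*(v^h,e^h,\cdot)$ produces $b^*(\phi^h,u,\phi^h)$, not $b^*(\phi^h,v^h,\phi^h)$. To match the hypothesis on $\|\nabla v^h\|$, use $b^*(u,e^h,\cdot)+b^*(e^h,v^h,\cdot)$ as the paper does. Also, absorbing $\frac{\nu}{4}\|\nabla\phi^h\|^2$ in that term costs $\frac{27}{4}\frac{C_2^4}{\nu^3}\|\nabla v^h\|^4\|\phi^h\|^2$, not $\frac{27}{16}\frac{C_2^4}{\nu^3}\|\nabla v^h\|^4\|\phi^h\|^2$. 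The paper absorbs $\frac{\nu}{2}$, obtains $\frac{27}{32}$, and then doubles the inequality.
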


\begin{remark}\label{remark-semi}
If $\frac{c^2\mu_1}{16}\leq \alpha\chi$, then
$\beta=\min\{\alpha \chi, \frac{c^2\mu_1}{16}\}=\frac{c^2\mu_1}{16}$, and hence
\begin{equation}
\begin{gathered}
   \frac{\mu_1+4}{\beta}C_1^2H^2
\max_{0\le t\le T}\|\nabla p\|^2
=
\frac{16(\mu_1+4)}{c^2\mu_1}C_1^2H^2
\max_{0\le t\le T}\|\nabla p\|^2 \\
=
\left(16+\frac{64}{\mu_1}\right)
\frac{C_1^2H^2}{c^2}
\max_{0\le t\le T}\|\nabla p\|^2. 
\end{gathered}
\end{equation}
Therefore, this error
contribution scales as $\mathcal{O}(H^2/c^2)$ rather than $\mathcal{O}(H^2)$.
\end{remark}

\begin{proof}
To derive the error equation, we subtract the weak form of the nudged system \eqref{nudged_time_conti_mom}--\eqref{nudged_time_conti_conti} from \eqref{weak-compre-mom}--\eqref{weak-compre-conti}. We choose the test functions $w^h=\phi^h$, and $\lambda^h=\phi^h_p$, adding the resulting equations gives:
\begin{equation}
\begin{gathered}
    \frac{1}{2} \frac{d\|\phi^h\|^2}{dt} +\nu \|\nabla \phi^h\|^2 +\frac{\nu}{3}\|\nabla \cdot \phi^h\|^2 +\chi (I_H(\phi^h), \phi^h)\\
     -\mu_1 (I_H(p-q^h), \phi^h_p)-\mu_2 (I_H(q^h)-q^h, \phi^h_p)\\
    =(\eta_t, \phi^h) +\nu (\nabla \eta,\nabla \phi^h) +\frac{\nu}{3}(\nabla \cdot \eta, \nabla \cdot \phi^h)
    + \chi (I_H(\eta), \phi^h)  
 +(\nabla \cdot \eta, \phi^h_p)\\  -(\eta_p, \nabla \cdot \phi^h) + b^*(u,u,\phi^h)-b^*(v^h,v^h, \phi^h) +\frac{1}{c^2}( \frac{\partial p}{\partial t} + u\cdot \nabla p, \phi^h_p ).\label{error-equation}
 \end{gathered}
\end{equation}
The key contribution is handling the pressure--related terms and the advection term from the continuity equation of the slightly compressible flow.
\begin{equation}
\begin{gathered}
    -\mu_1 (I_H(p-q^h), \phi^h_p)-\mu_2 (I_H(q^h)-q^h,\phi^h_p)\\
    = -\mu_1 (I_H(p)-q^h, \phi^h_p)-(\mu_1-\mu_2) ((I-I_H)(q^h), \phi^h_p).
    \end{gathered}
\end{equation}
We bound the above terms as follows:
\begin{equation}
\begin{gathered}
    -\mu_1 (I_H(p)-q^h, \phi^h_p)=-\mu_1(I_H(p)-p,\phi^h_p) -\mu_1(p-q^h,\phi^h_p)\\
    \geq -\frac{\mu_1}{2}C_1^2 H^2 \|\nabla p\|^2 -\frac{\mu_1}{2}\|\phi^h_p\|^2 + \mu_1 \|\phi^h_p\|^2 -\mu_1 (\eta_p, \phi^h_p)\\
    \geq -\frac{\mu_1}{2}C_1^2 H^2 \|\nabla p\|^2 +  \frac{\mu_1}{4} \|\phi^h_p\|^2 - 4\mu_1 \|\eta_p\|^2,
    \end{gathered}
\end{equation}
\begin{equation}
\begin{gathered}
    -(\mu_1-\mu_2) ((I-I_H)(q^h), \phi^h_p)\\
    =-(\mu_1-\mu_2) ((I-I_H)(q^h-p),\phi^h_p)
    -(\mu_1-\mu_2) ((I-I_H)(p),\phi^h_p)\\
    =(\mu_1 -\mu_2)\big[\|\phi^h_p\|^2- (I_H(\phi^h_p),\phi^h_p)\big] -(\mu_1-\mu_2) ((I-I_H)(\eta_p),\phi^h_p) \\
    -(\mu_1-\mu_2) ((I-I_H)(p),\phi^h_p).
    \end{gathered}    
\end{equation}
Assume $\mu_1\geq \mu_2$. Since $I_H$ is an $L^2$-projection,
\begin{equation}
    (\mu_1-\mu_2)\left(\|\phi^h_p\|^2- (I_H(\phi^h_p),\phi^h_p)\right)=(\mu_1-\mu_2)(\|\phi^h_p\|^2- \|I_H(\phi^h_p)\|^2)\geq 0.
\end{equation}
Thus, we have
\begin{equation}
\begin{gathered}
    -(\mu_1-\mu_2) ((I-I_H)(q^h), \phi^h_p)\geq -(\mu_1-\mu_2) ((I-I_H)(\eta_p),\phi^h_p) \\
    -(\mu_1-\mu_2) ((I-I_H)(p),\phi^h_p)\\
    \geq -\frac{2 (\mu_1-\mu_2)}{\mu_1} C_1^2 H^2 (\|\nabla \eta_p\|^2+\|\nabla p\|^2) -\frac{\mu_1}{8}\|\phi^h_p\|^2.
    \end{gathered}
    \end{equation}
Hence,
\begin{equation}
\begin{gathered}
    -\mu_1 (I_H(p-q^h), \phi^h_p)-\mu_2 (I_H(q^h)-q^h,\phi^h_p) \geq 
    -\frac{\mu_1}{2}C_1^2 H^2 \|\nabla p\|^2 \\+  \frac{\mu_1}{8} \|\phi^h_p\|^2 - 4\mu_1 \|\eta_p\|^2
-\frac{2 (\mu_1-\mu_2)}{\mu_1} C_1^2 H^2 (\|\nabla \eta_p\|^2+\|\nabla p\|^2).\label{estimate-pressure}
    \end{gathered}
    \end{equation}
For the following terms, we have
\begin{equation}
\begin{gathered}
     (\nabla \cdot \eta, \phi^h_p)\leq \frac{\mu_1}{32}\|\phi^h_p\|^2 + \frac{8}{\mu_1} \|\nabla \eta\|^2,\\
    -(\eta_p, \nabla \cdot \phi^h)\leq \frac{3}{\nu}\|\eta_p\|^2 +\frac{\nu}{12}\|\nabla \cdot \phi^h\|^2,
    \end{gathered}
\end{equation}
\begin{equation}
\begin{split}
    \frac{1}{c^2} \left( \frac{\partial p}{\partial t}, \phi^h_p \right) = \frac{1}{c^2} \left( \frac{\partial (p-q^h)}{\partial t}, \phi^h_p \right)+\frac{1}{c^2} \left( \frac{\partial q^h}{\partial t}, \phi^h_p \right)\\
    = -\frac{1}{2c^2}  \frac{d \|\phi^h_p\|^2}{d t}+ \frac{1}{c^2} (\eta_{p,t}, \phi^h_p) +\frac{1}{c^2} \left( \frac{\partial q^h}{\partial t}, \phi^h_p \right)\\
    \leq -\frac{1}{2c^2}  \frac{d \|\phi^h_p\|^2}{d t} +\frac{8}{\mu_1c^4}(\|\eta_{p,t}\|^2+\|q^h_t\|^2) + \frac{\mu_1}{32}\|\phi^h_p\|^2,
    \end{split}
\end{equation}
\begin{equation}
\begin{split}
\frac{1}{c^2}\left(u\cdot \nabla p, \phi^h_p \right) \leq \frac{1}{c^2} \|u\|_{L^6} \|\nabla p\|_{L^3}\|\phi^h_p\| 
    \leq \frac{2}{\sqrt{3}}\frac{1}{c^2}\|\nabla u\| \|\nabla p\|_{L^3} \|\phi^h_p\| \\
    \leq \frac{1}{c^4}\frac{32}{3}\frac{1}{\mu_1} \|\nabla u\|^2 \|\nabla p\|^2_{L^3} + \frac{\mu_1}{32}\|\phi^h_p\|^2
    \leq \frac{16}{3c^4\mu_1} \left(\|\nabla u\|^4 + \|\nabla p\|^4_{L^3}\right) + \frac{\mu_1}{32}\|\phi^h_p\|^2.\label{estimate-pressure-2}
    \end{split}
\end{equation}
Now we handle the nonlinear terms $ b^*(u,u,\phi^h)-b^*(v^h,v^h, \phi^h)$ as follows:
\begin{equation}
\begin{gathered}
b^*(u,u,\phi^h)-b^*(v^h,v^h, \phi^h) = b^*(u,u, \phi^h) + b^*(\phi^h-v^h,v^h, \phi^h) - b^*(\phi^h,v^h, \phi^h)\\
=b^*(u,u-v^h,\phi^h) + b^*(\eta, v^h,\phi^h) -b^*(\phi^h,v^h,\phi^h)\\
=b^*(u,\eta,\phi^h) + b^*(\eta, v^h,\phi^h) -b^*(\phi^h,v^h,\phi^h).
\end{gathered}
\end{equation}
We estimate the above terms as follows:
\begin{equation}
\begin{gathered}
    b^*(u,\eta,\phi^h)\leq M\|\nabla u\|\|\nabla \eta\|\|\nabla \phi^h\|\leq \frac{2}{\nu}M^2 \|\nabla u\|^2 \|\nabla \eta\|^2 +\frac{\nu}{8}\|\nabla \phi^h\|^2,\\
    b^*(\eta, v^h,\phi^h) \leq M\|\nabla \eta\|\|\nabla v^h\|\|\nabla \phi^h\|\leq \frac{2}{\nu}M^2 \|\nabla \eta\|^2 \|\nabla v^h\|^2 +\frac{\nu}{8}\|\nabla \phi^h\|^2,\\
    b^*(\phi^h,v^h,\phi^h)\leq C_2 \|\phi^h\|^{1/2}\|\nabla v^h\|\|\nabla \phi^h\|^{3/2}\leq \frac{\nu}{2}\|\nabla \phi^h\|^2 + \frac{27}{32}\frac{C_2^4}{\nu^3}\|\nabla v^h\|^4 \|\phi^h\|^2.
    \end{gathered}
\end{equation}
Thus, 
\begin{equation}
    \begin{gathered}
    b^*(u,u,\phi^h)-b^*(v^h,v^h, \phi^h)\leq \frac{3\nu}{4}\|\nabla \phi^h\|^2 \\
    +\frac{2}{\nu}M^2(\|\nabla u\|^2+\|\nabla v^h\|^2) \|\nabla \eta\|^2
    + \frac{27}{32}\frac{C_2^4}{\nu^3}\|\nabla v^h\|^4 \|\phi^h\|^2.\label{estimate-nonlinear}
    \end{gathered}
\end{equation}
Now, we take care of $ \nu (\nabla \eta,\nabla \phi^h)$, $(\eta_t, \phi^h)$, and $\frac{\nu}{3}(\nabla \cdot \eta, \nabla \cdot \phi^h)$.
\begin{equation}
\begin{gathered}
    \nu (\nabla \eta,\nabla \phi^h)\leq \frac{\nu}{16}\|\nabla \phi^h\|^2 + \frac{4}{\nu}\|\nabla \eta\|^2.\\
    (\eta_t, \phi^h)\leq \|\nabla \eta_t\|_{-1}\|\nabla \phi^h\|\leq \frac{4}{\nu}\|\eta_t\|^2_{-1} + \frac{\nu}{16}\|\nabla \phi^h\|^2,\\
    \frac{\nu}{3}(\nabla \cdot \eta, \nabla \cdot \phi^h)\leq \frac{\nu}{3}\|\nabla \cdot \eta\|\|\nabla \cdot \phi^h\|\leq \frac{\nu}{6}\|\nabla  \eta\|^2 +  \frac{\nu}{6}\|\nabla \cdot \phi^h\|^2.
    \end{gathered}
\end{equation}
Lastly, we handle the nudging terms for velocity.
\begin{equation}
\begin{split}
    \chi (I_H(\phi^h), \phi^h)=  \chi\|\phi^h\|^2 -\chi \|(I-I_H)(\phi^h)\|^2
    \geq \chi\|\phi^h\|^2 -\chi C_1^2H^2 \|\nabla \phi^h\|^2.
    \end{split}
\end{equation}
\begin{equation}
    \chi (I_H(\eta), \phi^h) \leq  \chi\|I_H(\eta)\| \|\phi^h\|\leq \frac{\chi}{2}\|I_H(\eta)\|^2 +\frac{\chi}{2}\|\phi^h\|^2.\label{estimate-velocity-nudging}
\end{equation}
Adding (\ref{estimate-pressure})--(\ref{estimate-pressure-2}), (\ref{estimate-nonlinear})--(\ref{estimate-velocity-nudging}), and multiplying it by $2$, the error equation (\ref{error-equation}) becomes
\begin{equation}
\begin{split}
    \frac{d(\|\phi^h\|^2+1/c^2 \|\phi^h_p\|^2)}{dt} +\frac{\nu}{8} \|\nabla \phi^h\|^2 +\frac{\nu}{6}\|\nabla \cdot \phi^h\|^2 +\frac{\mu_1}{16}\|\phi^h_p\|^2\\
    +\left(\chi-\frac{27}{16}\frac{C_2^4}{\nu^3}\|\nabla v^h\|^4\right)\|\phi^h\|^2 + \left(\frac{\nu}{8}- 2\chi C_1^2H^2 \right)\|\nabla \phi^h\|^2
\leq      \mu_1 C_1^2 H^2 \|\nabla p\|^2 \\
+ 8\mu_1 \|\eta_p\|^2
+\frac{4 (\mu_1-\mu_2)}{\mu_1} C_1^2 H^2 (\|\nabla \eta_p\|^2+\|\nabla p\|^2) 
+ \frac{16}{\mu_1} \|\nabla \eta\|^2+\frac{6}{\nu}\|\eta_p\|^2 \\
+\frac{16}{\mu_1c^4}\|\eta_{p,t}\|^2+\frac{16}{\mu_1c^4}\|q^h_t\|^2 + \frac{1}{c^4}\frac{1}{\mu_1}\frac{32}{3} \left(\|\nabla u\|^4 + \|\nabla p\|^4_{L^3}\right)+\frac{8}{\nu}\|\nabla \eta\|^2\\
+\frac{4}{\nu}M^2(\|\nabla u\|^2+\|\nabla v^h\|^2) \|\nabla \eta\|^2
 +\frac{8}{\nu}\|\eta_t\|^2_{-1}+\frac{\nu}{3}\|\nabla  \eta\|^2
+\chi\|I_H(\eta)\|^2.
 \end{split}
 \label{error-estimate-1}
\end{equation}
With $H$ and $\chi$ conditions (\ref{assumptions}), and rearranging the right hand side of (\ref{error-estimate-1}),
\begin{equation}
\begin{gathered}
    \frac{d(\|\phi^h\|^2+1/c^2\|\phi^h_p\|^2)}{dt} +\frac{\nu}{8} \|\nabla \phi^h\|^2 +\alpha \chi \|\phi^h\|^2 +\frac{\mu_1}{16}\|\phi^h_p\|^2 
\leq \mu_1 C_1^2 H^2 \|\nabla p\|^2\\ + 4 C_1^2 H^2 \|\nabla p\|^2  +8\mu_1 \|\eta_p\|^2 +\frac{6}{\nu} \|\eta_p\|^2
+4C_1^2 H^2 \|\nabla \eta_p\|^2 
+\frac{16}{\mu_1c^4}\|\eta_{p,t}\|^2\\
+\frac{16}{\mu_1c^4}\|q^h_t\|^2 
+ \big[\frac{16}{\mu_1}+\frac{8}{\nu}+\frac{\nu}{3}
+ \frac{4}{\nu}M^2(\|\nabla u\|^2+\|\nabla v^h\|^2) \big] \|\nabla \eta\|^2\\
+ \frac{1}{c^4}\frac{1}{\mu_1}\frac{32}{3} \big[\|\nabla u\|^4 + \|\nabla p\|^4_{L^3}\big]
 +\frac{8}{\nu}\|\eta_t\|^2_{-1}
+\chi\|\eta\|^2.
 \end{gathered}
 \label{error-estimate-2}
\end{equation}
Using the integration factor $\exp [\beta t]$, where $\beta=\min\{\alpha \chi, \frac{c^2\mu_1}{16}\}$. Integrating from $0$ to $t$, and multiplying $\exp[-\beta t ]$ to both sides of (\ref{error-estimate-2}) gives:
\allowdisplaybreaks
\begin{align}
&    \|\phi^h\|^2 +\frac{1}{c^2}\|\phi^h_p\|^2 +\frac{\nu}{8}\int_{0}^t \exp [-\beta (t-t')]\|\nabla \phi^h\|^2 \, dt'\nonumber\\
&\quad
\leq \exp[-\beta t] \left(\|\phi^h(0)\|^2 +\frac{1}{c^2}\|\phi^h_p(0)\|^2 \right) +\int_{0}^t \exp [-\beta (t-t')] \bigg[ 
\mu_1 C_1^2 H^2 \|\nabla p\|^2\nonumber\\
&\quad
+ 4 C_1^2 H^2 \|\nabla p\|^2 
+8\mu_1 \|\eta_p\|^2 +\frac{6}{\nu} \|\eta_p\|^2 +4C_1^2 H^2 \|\nabla \eta_p\|^2 
+\frac{16}{\mu_1c^4}\|\eta_{p,t}\|^2
\nonumber\\
&\quad
+\frac{16}{\mu_1c^4}\|q^h_t\|^2
+ \left(\frac{16}{\mu_1}+\frac{8}{\nu}+\frac{\nu}{3}
+ \frac{4}{\nu}M^2(\|\nabla u\|^2+\|\nabla v^h\|^2) \right) \|\nabla \eta\|^2\nonumber\\
&\quad
+ \frac{1}{c^4}\frac{1}{\mu_1}\frac{32}{3} \left(\|\nabla u\|^4 + \|\nabla p\|^4_{L^3}\right)
 +\frac{8}{\nu}\|\eta_t\|^2_{-1}
+\chi\|\eta\|^2
\bigg]\, dt'.\label{error-estimate-after-IBP}
\end{align}

We handle $\int_{0}^t \exp [-\beta (t-t')] (\mu_1+4) C_1^2 H^2 \|\nabla p\|^2\, dt'$ term as the following:
\begin{equation}
\begin{gathered}
    \int_{0}^t \exp [-\beta (t-t')] (\mu_1+4) C_1^2 H^2 \|\nabla p\|^2\, dt' 
\\
\leq \max_{0\leq t \leq T} C_1^2 H^2\|\nabla p\|^2 (\mu_1+4) \exp [-\beta t] \int_{0}^t \exp [\beta t']\, dt'\\
=\max_{0\leq t \leq T} C_1^2 H^2\|\nabla p\|^2 (\mu_1+4) \exp [-\beta t] \frac{\exp [\beta t]-1}{\beta}\\
=\max_{0\leq t \leq T} C_1^2 H^2\|\nabla p\|^2 (\mu_1+4)\frac{1-\exp[-\beta t]}{\beta}
\leq \frac{(\mu_1+4)}{\beta} C_1^2 H^2 \max_{0\leq t \leq T}\|\nabla p\|^2.
\end{gathered}
\end{equation}
Similarly,
\begin{equation}
\begin{gathered}
   \int_{0}^t \exp [-\beta (t-t')] 8\mu_1 \|\eta_p\|^2\, dt'\leq 8\frac{\mu_1}{\beta}\max_{0\leq t\leq T}\|\eta_p\|^2,\\
   \int_{0}^t \exp [-\beta (t-t')] \chi\|\eta\|^2\, dt'\leq  \frac{\chi}{\beta}\max_{0\leq t\leq T}\|\eta\|^2.
   \end{gathered}
\end{equation}
We simplify by the following estimates.
\begin{equation}
    \begin{gathered}
        \frac{4}{\nu}M^2 \int_{0}^t \|\nabla u\|^2 \|\nabla \eta\|^2\, dt' \leq  \frac{4}{\nu}M^2 \|\nabla u\|^2_{L^4(0,T;L^2(\Omega))}\|\nabla \eta\|^2_{L^4(0,T;L^2(\Omega))},\\
         \frac{4}{\nu}M^2 \int_{0}^t \|\nabla v^h\|^2 \|\nabla \eta\|^2\, dt' \leq  \frac{4}{\nu}M^2 \|\nabla v^h\|^2_{L^4(0,T;L^2(\Omega))}\|\nabla \eta\|^2_{L^4(0,T;L^2(\Omega))}.
    \end{gathered}
\end{equation}
Since $t>t'$, and $t\leq T$, $\exp [-\beta (t-t')]\leq 1$. Hence (\ref{error-estimate-after-IBP}) becomes
\allowdisplaybreaks
\begin{align}
&    \|\phi^h\|^2 +\frac{1}{c^2}\|\phi^h_p\|^2 +\frac{\nu}{8}\int_{0}^t \exp [-\beta (t-t')]\|\nabla \phi^h\|^2 \, dt' \nonumber\\
&\quad
\leq \exp [-\beta t] \big[\|\phi^h(0)\|^2 +\frac{1}{c^2}\|\phi^h_p(0)\|^2 \big]
+\frac{(\mu_1+4)}{\beta} C_1^2 H^2 \max_{0\leq t \leq T}\|\nabla p\|^2 \nonumber\\
&\quad
+8\frac{\mu_1}{\beta}\max_{0\leq t\leq T}\|\eta_p\|^2+\frac{\chi}{\beta}\max_{0\leq t\leq T}\|\eta\|^2+ 4C_1^2 H^2 \|\nabla \eta_p\|^2_{L^2(0,T;L^2(\Omega))}\nonumber\\
&\quad
+\frac{4}{\nu}M^2 \big[\|\nabla u\|^2_{L^4(0,T;L^2(\Omega))}+\|\nabla v^h\|^2_{L^4(0,T;L^2(\Omega))}\big]\|\nabla \eta\|^2_{L^4(0,T;L^2(\Omega))}\nonumber\\
&\quad
 + \frac{6}{\nu} \|\eta_p\|^2_{L^2(0,T;L^2(\Omega))}+\frac{16}{\mu_1 c^4}\|\eta_{p,t}\|^2_{L^2(0,T;L^2(\Omega))}
\nonumber\\
&\quad
 + \big[\frac{16}{\mu_1}+\frac{8}{\nu}+\frac{\nu}{3} \big] \|\nabla \eta\|^2_{L^2(0,T;L^2(\Omega))}
+\frac{8}{\nu}\int_{0}^T \|\eta_t\|^2_{-1}\, dt \nonumber\\
&\quad
+\frac{16}{\mu_1 c^4} \big[ \|q^h_t\|^2_{L^2(0,T;L^2(\Omega))}+ \frac{2}{3} \|\nabla u\|^4_{L^4(0,T;L^2(\Omega))} + \frac{2}{3} \|\nabla p\|^4_{L^4(0,T;L^3(\Omega))} \big].
\end{align}
Using the triangle inequality, we have the final result (\ref{error-estimate-result}).
\end{proof}

\subsection{Fully-discrete error estimates}\label{full-discrete-error}
Define the velocity error at $t_n$: $e^{n}_h=u(t_n)-v^{n}_h=\eta^n-\phi^n_h$, where $\eta^n = u(t_n)-\Tilde{v}$, $\phi^n_h= v^{n}_h-\Tilde{v}$, $\forall \Tilde{v}\in X^h$, the pressure error $e^{n}_{p,h}=p(t_n)-q^n_h =\eta^n_p-\phi^n_{p,h}$, where $\eta^n_p = p(t_n)-\Tilde{q}$, $\phi^{n}_{p,h}= q^n_h-\Tilde{q}$,  $\forall \Tilde{q}\in Q^h$.
\begin{theorem}
We assume 
\begin{equation}
\begin{gathered}
\chi- \frac{32C_2^4}{\nu^3}\max_{0\leq n\leq N-1}\|\nabla v^{n+1}_h\|^4\geq \alpha \chi,\quad 
\frac{\nu}{16}-\chi C_1^2 H^2\geq 0,
\text{and} \ \mu_1\geq \mu_2,
\label{fully-nudging-assumption}
\end{gathered}
\end{equation}
and the following regularity assumptions:
\begin{equation}\label{smoothAssum-fullydiscrete}
\begin{gathered}
u \in L^\infty(0,T;(H^{m+1}(\Omega))^3)
    \cap W^{1,4}(0,T;(H^1(\Omega))^3),\ 
 u_{tt} \in L^2(0,T;(H^{-1}(\Omega))^3),\\
p \in L^\infty(0,T;H^{m}(\Omega))
    \cap L^4(0,T;W^{1,3}(\Omega)),\
p_t \in L^2(0,T;H^{m}(\Omega)),\\
p_{tt} \in L^2(0,T;L^2(\Omega)).
\end{gathered}
\end{equation}
We have the following error estimate:
\allowdisplaybreaks

\begin{align}
&\frac{1}{2}\|e^{N}_h\|^2
+\frac{1}{2c^2}\|e^N_{p,h}\|^2
+\frac{\nu \Delta t}{16}
\sum_{n=0}^{N-1}(1+\beta \Delta t)^{n-N}
\|\nabla e^{n+1}_{h}\|^2
\leq
\exp\!\big[-\frac{\beta}{1+\beta \Delta t} T\big]\nonumber\\
&\bigg[
2\|e^{0}_h\|^2
+\frac{2}{c^2}\|e^{0}_{p,h}\|^2
+\frac{\nu}{2}\|\nabla e^{0}_h\|^2
+\chi\|e^{0}_h\|^2
+\frac{\nu}{2}\|\nabla \eta^{0}\|^2
+\chi\|\eta^{0}\|^2
\bigg]
\nonumber\\
&\quad
+2\|\eta^0\|^2
+\frac{2}{c^2}\|\eta^0_p\|^2
+\|\eta^{N}\|^2
+\frac{1}{c^2}\|\eta^N_{p}\|^2
+\chi \tn{\eta}^2_{l^2(L^2)}
\nonumber\\
&\quad
+\frac{(\mu_1+4)}{\beta}C_1^2H^2
\max_{0\le n\le N-1}\|\nabla p(t_{n+1})\|^2
+\Big[
\frac{8}{\nu}
+\frac{\nu}{3}
+\frac{\nu}{8}
+\frac{16}{\mu_1}
\Big]
\tn{\nabla\eta}^2_{l^2(L^2)}
\nonumber\\
&\quad
+\frac{4\Delta t^2}{3\nu}
\|u_{tt}\|^2_{L^2(0,T;H^{-1}(\Omega))}
+\frac{4}{\nu}
\|\eta_t\|^2_{L^2(0,T;H^{-1}(\Omega))}
+\frac{2M^2}{\nu}\Delta t^2
\tn{\nabla u}^4_{l^4(L^2)}
\nonumber\\
&\quad
+\frac{2M^2}{\nu}\Delta t^2
\|\nabla u_t\|^4_{L^4(0,T;L^2(\Omega))}
+\frac{32}{\mu_1c^4}
\|\eta_{p,t}\|^2_{L^2(0,T;H^{-1}(\Omega))}
+\frac{6}{\nu}
\tn{\eta_p}^2_{l^2(L^2)}
\nonumber\\
&\quad
+\frac{32\Delta t^2}{\mu_1c^4}
\|p_{tt}\|^2_{L^2(0,T;L^2(\Omega))}
+8\mu_1\tn{\eta_p}^2_{l^2(L^2)}+\frac{4M^2}{\nu}
\tn{\nabla v^h}^2_{l^4(L^2)}
\tn{\nabla\eta}^2_{l^4(L^2)}
\nonumber\\
&\quad
+\frac{32}{3c^4\mu_1}\tn{\nabla u}^4_{l^4(L^2)}
+\frac{32}{3c^4\mu_1}\tn{\nabla p}^4_{l^4(L^3)}+\frac{4C_2^2}{\nu}
\tn{u}_{l^\infty(L^2)}
\tn{\nabla u}_{l^2(L^2)}
\tn{\nabla\eta}^2_{l^4(L^2)},
\label{after_gronwall_2}
\end{align}
where $\beta=\min\{\frac{c^2\mu_1}{16}, \frac{\chi}{2}\}$.
\end{theorem}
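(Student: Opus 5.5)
We mirror the semi-discrete argument at the discrete level, with time-truncation terms added. First we evaluate the weak compressible system \eqref{weak-compre-mom}--\eqref{weak-compre-conti} at $t_{n+1}$ and write it in the form of the scheme \eqref{BE_momentum}, \eqref{fully-conti}:
\[
\frac{u(t_{n+1})-u(t_n)}{\Delta t}, \qquad b^*(u(t_n),u(t_{n+1}),\cdot),
\]
collecting the consistency residuals. These are
\[
\tau_1^{n+1}=u_t(t_{n+1})-\frac{u(t_{n+1})-u(t_n)}{\Delta t}
\]
and the difference $b^*(u(t_{n+1})-u(t_n),u(t_{n+1}),\cdot)$. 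On the pressure side the time derivative $p_t(t_{n+1})$ is written as a backward difference plus a residual $\tau_2^{n+1}$. We subtract the scheme and test with $w^h=\phi^{n+1}_h$, $\lambda^h=\phi^{n+1}_{p,h}$, then add the two equations. The discrete polarization identity from Theorem~\ref{BE_stability} gives
\[
\frac{1}{2\Delta t}\big(\|\phi^{n+1}_h\|^2-\|\phi^n_h\|^2+\|\phi^{n+1}_h-\phi^n_h\|^2\big).
\]
For the pressure, we split $\frac1{c^2}(p_t(t_{n+1}),\phi^{n+1}_{p,h})$ as $\frac{1}{c^2}\big(\frac{e^{n+1}_{p,h}-e^n_{p,h}}{\Delta t},\phi^{n+1}_{p,h}\big)$ plus a remainder. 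This produces the $\frac{1}{2c^2\Delta t}(\|\phi^{n+1}_{p,h}\|^2-\|\phi^n_{p,h}\|^2)$ telescoping term together with the terms $\eta_{p,t}$ (through $\frac{1}{\Delta t}\int_{t_n}^{t_{n+1}}\eta_{p,t}$) and $\Delta t\, p_{tt}$. Both are absorbed into $\frac{\mu_1}{32}\|\phi^{n+1}_{p,h}\|^2$ by Young's inequality, which gives the factors $\frac{32}{\mu_1c^4}$.

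The pressure nudging terms are handled exactly as in \eqref{estimate-pressure}, using $\mu_1\ge\mu_2$ and the $L^2$-projection property. They yield $\frac{\mu_1}{8}\|\phi^{n+1}_{p,h}\|^2$ minus the terms $(\mu_1+4)C_1^2H^2\|\nabla p\|^2$, $8\mu_1\|\eta_p\|^2$ and $4C_1^2H^2\|\nabla\eta_p\|^2$. The terms $(\nabla\cdot\eta,\phi_p)$, $(\eta_p,\nabla\cdot\phi)$ and $u\cdot\nabla p$ are treated as in \eqref{estimate-pressure-2}, now at $t_{n+1}$. The velocity nudging terms are bounded as before, giving $\chi\|\phi^{n+1}_h\|^2-\chi C_1^2H^2\|\nabla\phi^{n+1}_h\|^2$ against $\frac{\chi}{2}\|\eta\|^2+\frac{\chi}{2}\|\phi\|^2$. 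This halving is why $\beta$ involves $\chi/2$.

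The nonlinear term is the main obstacle, since the linearization uses the lagged convecting field. We write
\begin{align*}
b^*(u^{n+1},u^{n+1},\phi^{n+1})-b^*(v^n_h,v^{n+1}_h,\phi^{n+1}) ={}& b^*(u^{n+1}-u^n,u^{n+1},\phi^{n+1})+b^*(e^n,u^{n+1},\phi^{n+1})\\
&+b^*(v^n_h,e^{n+1},\phi^{n+1}).
\end{align*}
We then expand $e=\eta-\phi$. The term $b^*(v^n_h,\phi^{n+1},\phi^{n+1})$ vanishes by skew-symmetry. The lag term is bounded by $M\|\nabla(u^{n+1}-u^n)\|$ and gives $\Delta t^2\|\nabla u_t\|^4_{L^4}$ together with $\tn{\nabla u}^4$.

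The delicate piece is $b^*(\phi^n,u^{n+1},\phi^{n+1})$, which involves $\phi$ at two time levels. We first try writing $\phi^n=\phi^{n+1}-(\phi^{n+1}-\phi^n)$. The $\phi^{n+1}$ part is handled by Lemma~\ref{nonlinear_bound_John} and Young's inequality with $\frac{32C_2^4}{\nu^3}\|\nabla v_h\|^4$ (after re-expressing $u$ through $v_h$ and $\eta$), which is controlled by assumption \eqref{fully-nudging-assumption}. The increment part is absorbed by the numerical dissipation $\|\phi^{n+1}-\phi^n\|^2$ and the viscous term. If the increment does not close, we fall back to leaving $\|\nabla\phi^n\|^2$ on the right. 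That term is then absorbed after summation by shifting the index, which costs the initial terms $\frac{\nu}{2}\|\nabla e^0\|^2+\chi\|e^0\|^2$ that appear in the estimate. This is the mechanism behind those extra initial-data terms and the $C_2^2\tn{u}_{l^\infty}\tn{\nabla u}_{l^2}\tn{\nabla\eta}^2_{l^4}$ term.

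After absorption, and using $\frac{\nu}{16}-\chi C_1^2H^2\ge0$, each step takes the form
\[
\frac{a_{n+1}-a_n}{\Delta t}+\beta a_{n+1}\le g_{n+1},\qquad a_n=\|\phi^n_h\|^2+\tfrac{1}{c^2}\|\phi^n_{p,h}\|^2 ,
\]
with $\beta=\min\{c^2\mu_1/16,\chi/2\}$ coming from the $\frac{\mu_1}{16}\|\phi_p\|^2$ and $\frac{\chi}{2}\|\phi\|^2$ terms. We apply Lemma~\ref{emmrich1999discrete} with $\lambda=-\beta$, giving the weights $(1+\beta\Delta t)^{n-N}$. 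Then $(1+\beta\Delta t)^{-N}\le\exp[-\frac{\beta}{1+\beta\Delta t}T]$, since $\ln(1+x)\ge x/(1+x)$. The terms carrying large coefficients ($\mu_1$, $\chi$ times $H^2\|\nabla p\|^2$ or $\|\eta\|^2$) are bounded by their maxima times the geometric sum $\Delta t\sum(1+\beta\Delta t)^{n-N}\le 1/\beta$, exactly as in the continuous proof. Finally, the triangle inequality $e=\eta-\phi$ converts the bound to $e^N_h$, $e^N_{p,h}$, producing the $\|\eta^N\|^2$ and $\|\eta^0\|^2$ terms.
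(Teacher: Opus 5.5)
Your skeleton is the paper's: the error equation at $t_{n+1}$, polarization, the Taylor residuals, the pressure-nudging bound, weighted discrete Gronwall, and the triangle inequality. The gap is the nonlinear term, the one genuinely new ingredient: you never close it, and what you sketch would not give the stated estimate.

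Your splitting $b^*(u^{n+1}-u^n,u^{n+1},\cdot)+b^*(e^n,u^{n+1},\cdot)+b^*(v^n_h,e^{n+1},\cdot)$ is algebraically correct. But it puts $u^{n+1}$, not $v^{n+1}_h$, in the middle slot of the lagged term $b^*(\phi^n_h,\cdot,\phi^{n+1}_h)$, while \eqref{fully-nudging-assumption} only controls $\|\nabla v^{n+1}_h\|$. Substituting $u^{n+1}=v^{n+1}_h+\eta^{n+1}-\phi^{n+1}_h$ leaves $b^*(\phi^n_h,\eta^{n+1},\phi^{n+1}_h)$, which the stated hypothesis does not cover; you would need an extra condition on $\|\nabla\eta\|$. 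Your split also produces $b^*(\eta^n,u^{n+1},\phi^{n+1}_h)$, hence a $\tn{\nabla u}^2_{l^4(L^2)}\tn{\nabla\eta}^2_{l^4(L^2)}$ term. That term is harmless for the rate but is not in the statement. The paper splits instead as
\[
b^*(u^{n+1}-u^n,u^{n+1},\phi^{n+1}_h)+b^*(e^n,v^{n+1}_h,\phi^{n+1}_h)+b^*(u^n,e^{n+1},\phi^{n+1}_h).
\]
The pieces are then $b^*(\eta^n,v^{n+1}_h,\phi^{n+1}_h)$, which gives the $\frac{4M^2}{\nu}\tn{\nabla v^h}^2_{l^4(L^2)}\tn{\nabla\eta}^2_{l^4(L^2)}$ term, together with $b^*(u^n,\eta^{n+1},\phi^{n+1}_h)$ and $b^*(\phi^n_h,v^{n+1}_h,\phi^{n+1}_h)$. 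The $\frac{4C_2^2}{\nu}\tn{u}_{l^\infty(L^2)}\tn{\nabla u}_{l^2(L^2)}\tn{\nabla\eta}^2_{l^4(L^2)}$ term comes from Lemma~\ref{nonlinear_bound_John} applied to $b^*(u^n,\eta^{n+1},\phi^{n+1}_h)$ with $u(t_n)$ in the first slot. It does not come from an index shift, as you assert.

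The real gap is the lagged term itself. Your first route does not close. The increment piece carries $\|\nabla(\phi^{n+1}_h-\phi^n_h)\|^{1/2}$, which $\frac{1}{2\Delta t}\|\phi^{n+1}_h-\phi^n_h\|^2$ cannot control without an inverse estimate and a step-size restriction. Your fallback is the paper's bound \eqref{non_linear_4}. That bound leaves on the right not only $\frac{\nu}{8}\|\nabla\phi^n_h\|^2$ but also $\frac{8C_2^4}{\nu^3}\|\nabla v^{n+1}_h\|^4\|\phi^n_h\|^2$. This is an old-level term with coefficient up to $\mathcal{O}(\chi)$, it is not a forcing term, and you never absorb it.

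The missing idea is to split the surviving nudging term as $\frac{\chi}{2}\|\phi^{n+1}_h\|^2=\frac{\chi}{4}\|\phi^{n+1}_h\|^2+\frac{\chi}{4}(\|\phi^{n+1}_h\|^2-\|\phi^n_h\|^2)+\frac{\chi}{4}\|\phi^n_h\|^2$. The hypothesis is exactly $\frac{\chi}{4}-\frac{8C_2^4}{\nu^3}\|\nabla v^{n+1}_h\|^4\ge\frac{\alpha\chi}{4}$, which is where the constant $32$ comes from. This second halving is why $\beta$ contains $\chi/2$. The Young split of $\chi(I_H\eta^{n+1},\phi^{n+1}_h)$ is not the reason: after multiplying by $2$ it still leaves $\chi\|\phi^{n+1}_h\|^2$.

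Consequently, your recursion with $a_n=\|\phi^n_h\|^2+c^{-2}\|\phi^n_{p,h}\|^2$ is incomplete. The differences $\frac{\nu}{4}(\|\nabla\phi^{n+1}_h\|^2-\|\nabla\phi^n_h\|^2)+\frac{\chi}{2}(\|\phi^{n+1}_h\|^2-\|\phi^n_h\|^2)$ travel with it. They must be summed by parts against the weights $w_n=(1+\beta\Delta t)^{n-N}$, which is the paper's $A_1$. That summation is the source of $\frac{\nu}{2}\|\nabla e^0_h\|^2+\chi\|e^0_h\|^2$ under the exponential factor. Since $w_n$ increases in $n$, the interior terms there carry the coefficient $w_{n-1}-w_n<0$, contrary to the sign in the paper's $A_1$ display. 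So they must be absorbed into $\frac{\nu}{8}\|\nabla\phi^{n+1}_h\|^2$ and the retained $\frac{\alpha\chi}{2}\|\phi^n_h\|^2$, which needs a mild bound on $\beta\Delta t$.

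Finally, writing $p_t(t_{n+1})$ through $e^{n+1}_{p,h}-e^n_{p,h}$ leaves the remainder $\frac{1}{c^2}\big(\frac{q^{n+1}_h-q^n_h}{\Delta t},\phi^{n+1}_{p,h}\big)$. This is the discrete analogue of the semi-discrete $q^h_t$ term, and you omit it. \eqref{p_t_term} omits it too, so as written the bound lacks a corresponding $\|(q^{n+1}_h-q^n_h)/\Delta t\|$ contribution.
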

\begin{corollary}
Under the regularity assumptions~(\ref{smoothAssum-fullydiscrete}), the parameter assumptions~(\ref{fully-nudging-assumption}), and the approximation properties~(\ref{prop}), the following estimate holds:
\allowdisplaybreaks
\begin{align}
&\frac{1}{2}\|e^{N}_h\|^2
+\frac{1}{2c^2}\|e^N_{p,h}\|^2
+\frac{\nu \Delta t}{16}
\sum_{n=0}^{N-1}(1+\beta \Delta t)^{n-N}
\|\nabla e^{n+1}_{h}\|^2
\nonumber\\
&\le
\exp\!\big[-\tfrac{\beta}{1+\beta\Delta t}T\big]
\left[
2\|e^{0}_h\|^2
+\frac{2}{c^2}\|e^{0}_{p,h}\|^2
+\frac{\nu}{2}\|\nabla e_h^0\|^2
+\chi\|e_h^0\|^2
\right]
\nonumber\\
&\quad
+C h^{2m+2}|u^0|_{H^{m+1}}^2
+\frac{C}{c^2}h^{2m}|p^0|_{H^m}^2
+C h^{2m+2}|u^N|_{H^{m+1}}^2
+\frac{C}{c^2}h^{2m}|p^N|_{H^m}^2
\nonumber\\
&\quad
+\exp\!\big[-\tfrac{\beta}{1+\beta\Delta t}T\big]
\left[
C\nu h^{2m}|u^0|_{H^{m+1}}^2
+C\chi h^{2m+2}|u^0|_{H^{m+1}}^2
\right]
\nonumber\\
&\quad
+C\chi h^{2m+2}\||u|_{H^{m+1}}\|_{l^2}^2
+\frac{(\mu_1+4)}{\beta}C_1^2H^2
\max_{0\le n\le N-1}\|\nabla p(t_{n+1})\|^2
\nonumber\\
&\quad
+C\Big[\frac{8}{\nu}+\frac{11\nu}{24}+\frac{16}{\mu_1}\Big]
h^{2m}\||u|_{H^{m+1}}\|_{l^2}^2
+\frac{4\Delta t^2}{3\nu}
\|u_{tt}\|^2_{L^2(0,T;H^{-1}(\Omega))}
\nonumber\\
&\quad
+\frac{C}{\nu}h^{2m+2}
|u_t|^2_{L^2(0,T;H^{m+1}(\Omega))}
+\frac{2M^2}{\nu}\Delta t^2
|\|\nabla u\||^4_{l^4(L^2)}
+\frac{2M^2}{\nu}\Delta t^2
\|\nabla u_t\|^4_{L^4(0,T;L^2(\Omega))}
\nonumber\\
&\quad
+\frac{C}{\mu_1c^4}h^{2m}
|p_t|^2_{L^2(0,T;H^m(\Omega))}
+\frac{C}{\nu}h^{2m}\||p|_{H^m}\|_{l^2}^2
+\frac{32\Delta t^2}{\mu_1c^4}
\|p_{tt}\|^2_{L^2(0,T;L^2(\Omega))}
\nonumber\\
&\quad
+C\mu_1 h^{2m}\||p|_{H^m}\|_{l^2}^2
+CC_1^2H^2h^{2m-2}\||p|_{H^m}\|_{l^2}^2
\nonumber\\
&\quad
+\frac{32}{3c^4\mu_1}
|\|\nabla u\||^4_{l^4(L^2)}
+\frac{CM^2}{\nu}
|\|\nabla v^h\||^2_{l^4(L^2)}
h^{2m}\||u|_{H^{m+1}}\|_{l^4}^2
\nonumber\\
&\quad
+\frac{32}{3c^4\mu_1}
|\|\nabla p\||^4_{l^4(L^3)}
+\frac{CC_2^2}{\nu}
|\|u\||_{l^\infty(L^2)}
|\|\nabla u\||_{l^2(L^2)}
h^{2m}\||u|_{H^{m+1}}\|_{l^4}^2,
\label{after_gronwall_approx}
\end{align}
where $\beta=\min\{\frac{c^2\mu_1}{16}, \frac{\chi}{2}\}$.
\end{corollary}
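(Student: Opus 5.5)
The corollary follows from the fully discrete error estimate \eqref{after_gronwall_2} of the preceding theorem by choosing the free discrete functions $\Tilde{v}$ and $\Tilde{q}$ appropriately. No new stability or Gronwall argument is needed. At each time level $t_n$ (and pointwise in $t$ for the time-continuous norms) I take $\Tilde{v}$ and $\Tilde{q}$ to be (near-)best approximations of $u(t_n)$ and $p(t_n)$ in $X^h$ and $Q^h$. For definiteness I use a fixed time-independent linear quasi-interpolant, so that time derivatives commute with the projection and $\eta_t=u_t-\Tilde{v}_t$, $\eta_{p,t}=p_t-\Tilde{q}_t$ are themselves approximation errors of $u_t$ and $p_t$. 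The approximation properties \eqref{prop} then give, at every $t$,
\begin{equation*}
\|\eta\|\le Ch^{m+1}|u|_{m+1},\quad \|\nabla\eta\|\le Ch^{m}|u|_{m+1},\quad \|\eta_p\|\le Ch^{m}|p|_{m},
\end{equation*}
with the analogous bounds for $\eta_t$ and $\eta_{p,t}$.

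Next I substitute these bounds term by term into the right-hand side of \eqref{after_gronwall_2}.
\begin{itemize}
\item The terms $2\|\eta^0\|^2+\frac{2}{c^2}\|\eta^0_p\|^2$ and $\|\eta^N\|^2+\frac{1}{c^2}\|\eta^N_p\|^2$ give the $h^{2m+2}|u^0|^2$, $h^{2m}|p^0|^2/c^2$ terms and their analogues at $t_N$.
\item The terms $\frac{\nu}{2}\|\nabla\eta^0\|^2+\chi\|\eta^0\|^2$ keep the exponential prefactor and become $C\nu h^{2m}|u^0|^2+C\chi h^{2m+2}|u^0|^2$.
\item The discrete norms $\chi\tn{\eta}^2_{l^2(L^2)}$, $\tn{\nabla\eta}^2_{l^2(L^2)}$, $\tn{\eta_p}^2_{l^2(L^2)}$ and the $l^4$ norms are bounded by the same powers of $h$ times the corresponding discrete norms of $|u|_{H^{m+1}}$ and $|p|_{H^m}$. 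The coefficient $\frac{\nu}{3}+\frac{\nu}{8}$ is absorbed into $\frac{11\nu}{24}$.
\item The term $\frac{4}{\nu}\|\eta_t\|^2_{L^2(H^{-1})}$ is bounded using $\|\cdot\|_{-1}\le C\|\cdot\|$, giving $\frac{C}{\nu}h^{2m+2}|u_t|^2_{L^2(H^{m+1})}$. The term $\|\eta_{p,t}\|_{L^2(H^{-1})}$ is handled the same way, giving $h^{2m}|p_t|^2$.
\item The two $l^4$ nonlinear terms involving $\tn{\nabla\eta}^2_{l^4(L^2)}$ become $Ch^{2m}\||u|_{H^{m+1}}\|^2_{l^4}$.
\item The $\Delta t^2$ consistency terms and the pure data terms ($u_{tt}$, $p_{tt}$, $\tn{\nabla u}^4$, $\tn{\nabla p}^4_{l^4(L^3)}$, and $H^2\|\nabla p\|^2$) carry over unchanged.
\end{itemize}

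The only term requiring a small extra step is the $H^2$-weighted pressure term. It originates from $4C_1^2H^2\tn{\nabla\eta_p}^2_{l^2(L^2)}$ in the derivation (it is subsumed in the theorem's displayed constants). Here I use the $H^1$ approximation property of the pressure interpolant, $\|\nabla\eta_p\|\le Ch^{m-1}|p|_m$; for a general best approximation this is obtained via an inverse estimate. This produces $CC_1^2H^2h^{2m-2}\||p|_{H^m}\|^2_{l^2}$.

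I expect the main obstacle to be bookkeeping rather than analysis. The points to watch are:
\begin{itemize}
\item the projection must be chosen time-independent, so that $\eta_t$ and $\eta_{p,t}$ are genuinely approximation errors;
\item the $H^{-1}$ norms of these derivatives must be controlled by $L^2$ norms;
\item $\|\nabla\eta_p\|$ must be justified at order $h^{m-1}$;
\item all generic constants must be collected into $C$, and the initial-data terms kept attached to the exponential factor $\exp[-\frac{\beta}{1+\beta\Delta t}T]$.
\end{itemize}
Once these points are settled, the corollary follows with the same $\beta$.
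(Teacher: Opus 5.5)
Your proposal is correct and follows the same route as the paper, which gives no separate proof but implicitly inserts the approximation properties \eqref{prop} term by term into the fully discrete bound. Your additions are appropriate: a time-independent linear interpolant for the $\eta_t$, $\eta_{p,t}$ terms, $\|\cdot\|_{-1}\le C\|\cdot\|$, $\frac{\nu}{3}+\frac{\nu}{8}=\frac{11\nu}{24}$, and an $H^1$ pressure approximation for $4C_1^2H^2\tn{\nabla\eta_p}^2_{l^2(L^2)}$. One small correction: that last term is not subsumed in the constants of \eqref{after_gronwall_2}; it was simply dropped from the theorem's displayed statement, although it appears in the final inequality of the theorem's proof. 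This does no harm, because the corollary keeps it as the $CC_1^2H^2h^{2m-2}$ term, exactly as you derive it.
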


\begin{corollary}\label{corollary-fully-after-approx}
Under the regularity assumptions~(\ref{smoothAssum-fullydiscrete}), the parameter assumptions~(\ref{fully-nudging-assumption}), and the approximation properties~(\ref{prop}), the following estimate holds:
\begin{equation}
\begin{gathered}
    \frac{1}{2}\|e^{N}_h\|^2
    +\frac{1}{2c^2}\|e^N_{p,h}\|^2
    + \frac{\nu \Delta t}{16} \sum_{n=0}^{N-1}
(1+\beta \Delta t)^{n-N} \|\nabla e^{n+1}_{h}\|^2
\leq \exp\!
\big[-\frac{\beta}{1+\beta \Delta t}T\big]\\
\left[2\|e^{0}_h\|^2+\frac{2}{c^2}\|e^{0}_{p,h}\|^2+\frac{\nu}{2}\|\nabla e_h^0\|^2+\chi\|e_h^0\|^2+C\nu h^{2m}
+C\chi h^{2m+2}\right]\\
+ C\big[H^2+\Delta t^2+\frac{1}{c^4\mu_1}+\chi h^{2m+2}+
\mu_1 h^{2m}+ h^{2m} + H^2 h^{2m-2}
\big],
\label{after_gronwall_approx_after_nudging}
\end{gathered}
\end{equation}
where $\beta=\min\{\frac{c^2\mu_1}{16}, \frac{\chi}{2}\}$.
\end{corollary}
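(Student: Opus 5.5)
This corollary follows directly from the preceding corollary, estimate \eqref{after_gronwall_approx}. The plan is to go through that right-hand side term by term and absorb everything except the initial-error term into a generic constant $C$. That constant may depend on $\nu$, $T$, $C_1$, $C_2$, $M$ and on the regularity norms in \eqref{smoothAssum-fullydiscrete}, but not on $h$, $H$, $\Delta t$, $\mu_1$, $\chi$ or $c$ beyond the displayed factors.

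\textbf{Step 1: the terms that keep the exponential factor.} These are the terms multiplied by $\exp[-\frac{\beta}{1+\beta\Delta t}T]$. They are $C\nu h^{2m}|u^0|^2_{H^{m+1}}$ and $C\chi h^{2m+2}|u^0|^2_{H^{m+1}}$, which become $C\nu h^{2m}$ and $C\chi h^{2m+2}$ after absorbing $|u^0|_{H^{m+1}}$ into $C$. The initial-error terms are left untouched.

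\textbf{Step 2: the non-exponential terms, grouped by type.}
\begin{itemize}
\item The interpolation terms at $t_0$ and $t_N$ are bounded by $C(h^{2m+2}+h^{2m}/c^2)\le Ch^{2m}$, since $c\ge 1$ may be assumed and $h\le 1$. The same holds for $\frac{C}{\nu}h^{2m+2}|u_t|^2$, $\frac{C}{\nu}h^{2m}\||p|_{H^m}\|^2_{l^2}$, and the $\nabla\eta$ term with coefficient $\frac{8}{\nu}+\frac{11\nu}{24}+\frac{16}{\mu_1}$.
\item The terms $\frac{C}{\mu_1 c^4}h^{2m}|p_t|^2$ and the two $h^{2m}\||u|_{H^{m+1}}\|^2_{l^4}$ terms (with $\tn{\nabla v^h}_{l^4(L^2)}$ controlled by Theorem~\ref{BE_stability}) also go into $Ch^{2m}$.
\item The remaining terms match the listed contributions: $C\chi h^{2m+2}$, $C\mu_1 h^{2m}$, and $CH^2h^{2m-2}$.
\item The terms $\frac{4\Delta t^2}{3\nu}\|u_{tt}\|^2$, $\frac{2M^2}{\nu}\Delta t^2(\cdots)$ and $\frac{32\Delta t^2}{\mu_1 c^4}\|p_{tt}\|^2$ give $C\Delta t^2$; here one uses $\mu_1$ bounded below, say $\mu_1\ge 1$.
\item The two $\frac{32}{3c^4\mu_1}$ terms give $\frac{C}{c^4\mu_1}$.
\end{itemize}

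\textbf{Step 3: the main obstacle, the $H^2$ term.} The delicate term is $\frac{(\mu_1+4)}{\beta}C_1^2H^2\max\|\nabla p\|^2$ with $\beta=\min\{\frac{c^2\mu_1}{16},\frac{\chi}{2}\}$. I would argue by cases.
\begin{itemize}
\item If $\beta=\frac{c^2\mu_1}{16}$, then as in Remark~\ref{remark-semi}, $\frac{\mu_1+4}{\beta}=\frac{16}{c^2}\big(1+\frac{4}{\mu_1}\big)\le C$.
\item If $\beta=\frac{\chi}{2}$, then $\frac{\mu_1+4}{\beta}$ is bounded only under a parameter coupling $\mu_1\lesssim\chi$. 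This is implicit in choosing $\mu_1=\mathcal{O}(H^{-2})$ together with $\chi\sim H^{-2}$, the largest value allowed by \eqref{fully-nudging-assumption}.
\end{itemize}
I would state this coupling explicitly: $\mu_1\le C\chi$ with $\mu_1,\chi\ge 1$, and $c$ bounded below. Under it the term is $CH^2$.

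Collecting Steps 1--3 yields \eqref{after_gronwall_approx_after_nudging}.
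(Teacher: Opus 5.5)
Your route is the paper's. The corollary has no separate proof there: it is read off from \eqref{after_gronwall_approx} by collapsing every coefficient into a generic $C$, exactly as in your Steps 1--2.

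Your Step 3 is correct and more careful than the paper. The paper absorbs $\frac{\mu_1+4}{\beta}C_1^2H^2\max_n\|\nabla p(t_{n+1})\|^2$ into $CH^2$ and discusses only the case $\beta=c^2\mu_1/16$ (Remark~\ref{remark-fully}). When $\beta=\chi/2$ the factor is $2(\mu_1+4)/\chi$. The explicitly displayed terms $\mu_1h^{2m}$, $\chi h^{2m+2}$ and $1/(c^4\mu_1)$ force $C$ to be independent of $\mu_1$ and $\chi$. So the coupling $\mu_1\lesssim\chi$, together with lower bounds on $\mu_1$ and $c$, really is needed for the statement as written.

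The one step that fails as written is your control of $\tn{\nabla v^h}_{l^4(L^2)}$ by Theorem~\ref{BE_stability}. That theorem bounds only $\Delta t\sum_n\nu\|\nabla v_h^{n+1}\|^2$, the $l^2(H^1)$ norm. Even that bound grows like $1+\mu_1+\chi$, since its right-hand side contains $\mu_1\|I_Hp\|^2+\chi\|I_Hu\|^2$, and it says nothing about the $l^4(H^1)$ norm.

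The only information on $\max_n\|\nabla v_h^{n+1}\|$ in the paper is hypothesis \eqref{fully-nudging-assumption}. It gives $\max_n\|\nabla v_h^{n+1}\|^4\le(1-\alpha)\nu^3\chi/(32C_2^4)$, hence only $\tn{\nabla v^h}^2_{l^4(L^2)}\lesssim(T\chi)^{1/2}$. That leaves a term of size $\chi^{1/2}h^{2m}$, which is not among the listed contributions.

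The paper silently lets $C$ depend on this discrete norm of the nudged solution, just as $\|\nabla v^h\|_{L^4(0,T;L^2(\Omega))}$ disappears into $C$ in the semi-discrete corollaries. In effect it treats the discrete $H^1$ size of $v_h^n$ as an $\mathcal{O}(1)$ a priori quantity. You should do the same explicitly: list $\tn{\nabla v^h}_{l^4(L^2)}$ among the quantities $C$ depends on, rather than attributing its boundedness to the stability theorem.
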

\begin{remark}\label{remark-fully}
The above estimate suggests choosing the pressure nudging parameter as
$\mu_1=\mathcal{O}(H^{-2})$, then
\[
\|e_h^N\|+\frac1c\|e_{p,h}^N\|
=\mathcal{O}\!\left(H+\Delta t+Hh^{m-1}+h^m
+\tfrac{h^{m}}{H}+\chi^{1/2}h^{m+1}\right).
\]
The last two contributions come from the terms $\mu_1 h^{2m}$ and $\chi h^{2m+2}$ in \eqref{after_gronwall_approx_after_nudging}: with
$\mu_1=\mathcal{O}(H^{-2})$ one has $\mu_1 h^{2m}=\mathcal{O}(h^{2m}/H^{2})$.
They are dominated by the leading $\mathcal{O}(H)$ term precisely when $h^{m}\lesssim H^{2}$ and $\chi\lesssim H^{2}h^{-2m-2}$, which is the regime of practical interest (the observation mesh is much coarser than the computational mesh).  Under these two conditions the estimate reduces to the announced
$\mathcal{O}(H+\Delta t+Hh^{m-1}+h^{m})$.

As in Remark \ref{remark-semi}, if $c^2\mu_1/16\leq \frac{\chi}{2}$, the corresponding pressure error term in the above estimate has the improved dependence $H/c$ rather than $H$.
\end{remark}

\begin{proof}
We evaluate \eqref{weak-compre-mom}--\eqref{weak-compre-conti} at $t_{n+1}$, and subtract the nudged system \eqref{BE_momentum} and \eqref{fully-conti} from \eqref{weak-compre-mom}--\eqref{weak-compre-conti}. We set $w^h=\phi^{n+1}_h$, and $\lambda^h=\phi^{n+1}_{p,h}$ and add the resulting equations:
\begin{equation}
\begin{split}
    \frac{1}{\Delta t} (\phi^{n+1}_h - \phi^n_h, \phi^{n+1}_h) +\nu \|\nabla \phi^{n+1}_h\|^2 +\frac{\nu}{3} \|\nabla \cdot \phi^{n+1}_h\|^2 +\chi (I_H(\phi^{n+1}_h),\phi^{n+1}_h)\\
    -\mu_1(I_H(p(t_{n+1}))-q^{n+1}_h, \phi^{n+1}_{p,h})-\mu_2 (I_H(q^{n+1}_{h})-q^{n+1}_h,\phi^{n+1}_{p,h})\\
    = \nu(\nabla \eta^{n+1},\nabla \phi^{n+1}_h)+
    \left(u_t(t_{n+1})-\frac{u(t_{n+1})-u(t_n)}{\Delta t}, \phi^{n+1}_h \right)\\
    + \frac{1}{\Delta t}(\eta^{n+1} - \eta^n, \phi^{n+1}_h) 
+b^*(u(t_{n+1}),u(t_{n+1}), \phi^{n+1}_h )- b^*(v^{n}_h, v^{n+1}_h, \phi^{n+1}_h)\\
+\frac{\nu}{3}(\nabla \cdot \eta^{n+1}, \nabla \cdot \phi^{n+1}_h)
-(\eta^{n+1}_p, \nabla \cdot \phi^{n+1}_h) +(\nabla \cdot \eta^{n+1}, \phi^{n+1}_{p,h})\\
    +\frac{1}{c^2}(\frac{\partial p}{\partial t}(t_{n+1})+ u(t_{n+1})\cdot \nabla p(t_{n+1}), \phi^{n+1}_{p,h}) + \chi (I_H(\eta^{n+1}), \phi^{n+1}_h).
    \end{split}
\end{equation}
The proof follows the same general structure as the semi-discrete error estimate in Section \ref{semi-discrete-error}. Thus, we focus on the terms that are new in the fully discrete setting.

Applying the polarization identity, we have
\begin{equation}
    \frac{1}{\Delta t} (\phi^{n+1}_h - \phi^n_h, \phi^{n+1}_h) =\frac{1}{2\Delta t}\left(\|\phi^{n+1}_h\|^2 - \|\phi^{n}_h\|^2+\|\phi^{n+1}_h-\phi^{n}_h\|^2 \right).
\end{equation}
\begin{equation}
\begin{gathered}
     (u_t(t_{n+1})-\frac{u(t_{n+1})-u(t_n)}{\Delta t}, \phi^{n+1}_h )
     \leq \|u_t(t_{n+1})-\frac{u(t_{n+1})-u(t_n)}{\Delta t}\|_{-1}\|\nabla \phi^{n+1}_h\|\\
     \leq \frac{2}{\nu}\|u_t(t_{n+1})-\frac{u(t_{n+1})-u(t_n)}{\Delta t}\|^2_{-1} + \frac{\nu}{8}\|\nabla \phi^{n+1}_h\|^2.
     \end{gathered}   
\end{equation}
Using a Taylor expansion with remainder term, we have
\begin{equation}
\begin{gathered}
    \|u_t(t_{n+1})-\frac{u(t_{n+1})-u(t_n)}{\Delta t} \|^2_{-1} =\frac{1}{\Delta t^2}\|\int_{t_n}^{t_{n+1}}  u_{tt} (s) (t_{n}-s)\, ds\|^2_{-1}\\
    \leq \frac{1}{\Delta t^2} \left(\int_{t_n}^{t_{n+1}} \|u_{tt}(s)\|_{-1}|t_n-s|\, ds\right)^2\\
 \leq  \frac{1}{\Delta t^2} \int_{t_n}^{t_{n+1}} (t_n-s)^2\,ds \int_{t_n}^{t_{n+1}}\|u_{tt}(s)\|^2_{-1}\,ds
 =\frac{\Delta t}{3}\|u_{tt}\|^2_{L^2(t_n, t_{n+1}; H^{-1} (\Omega))}.\label{u_t-term}
    \end{gathered}
\end{equation}
Thus,
\begin{equation}
(u_t(t_{n+1})-\frac{u(t_{n+1})-u(t_n)}{\Delta t}, \phi^{n+1}_h ) \leq \frac{2\Delta t}{3\nu} \|u_{tt}\|^2_{L^2(t_n, t_{n+1}; H^{-1}(\Omega))} +\frac{\nu}{8}\|\nabla \phi^{n+1}_h\|^2.
\end{equation}
For $\frac{1}{\Delta t}(\eta^{n+1} - \eta^n, \phi^{n+1}_h)$, we have
\begin{equation}
\begin{split}
\frac{1}{\Delta t}(\eta^{n+1} - \eta^n, \phi^{n+1}_h)
\leq \frac{\nu}{8}\|\nabla \phi^{n+1}_h\|^2 +\frac{2}{ \nu \Delta t} \int_{t_n}^{t_{n+1}} \|\eta_t\|^2_{-1}\, dt.
\label{eta_t-term}
\end{split}
\end{equation}
For the nonlinear terms, we have
\begin{equation}
\begin{gathered}
    b^*(u(t_{n+1}),u(t_{n+1}), \phi^{n+1}_h )- b^*(v^{n}_h, v^{n+1}_h, \phi^{n+1}_h)\\
    =b^*(u(t_{n+1})-u(t_n), u(t_{n+1}),\phi^{n+1}_h) +b^*(\eta^n, v^{n+1}_h, \phi^{n+1}_h)\\
    +b^*(u(t_n),\eta^{n+1}, \phi^{n+1}_h) + b^*(\phi^{n}_h, v^{n+1}_h, \phi^{n+1}_h).
    \end{gathered}
\end{equation}
We will bound the above terms as follows:
\begin{equation}
\begin{gathered}
    b^*(u(t_{n+1})-u(t_n), u(t_{n+1}),\phi^{n+1}_h) \\
    \leq M\|\nabla (u(t_{n+1})-u(t_n))\|\|\nabla u(t_{n+1})\| \|\nabla \phi^{n+1}_h\|\\
    \leq \frac{\nu}{8}\|\nabla \phi^{n+1}_h\|^2 + \frac{2M^2}{\nu}\|\nabla (u(t_{n+1})-u(t_n))\|^2\|\nabla u(t_{n+1})\|^2\\
    \leq \frac{\nu}{8}\|\nabla \phi^{n+1}_h\|^2 + \frac{2M^2}{\nu}\Delta t \int_{t_n}^{t_{n+1}} \|\nabla u_t\|^2\, ds \|\nabla u(t_{n+1})\|^2\\
    \leq \frac{\nu}{8}\|\nabla \phi^{n+1}_h\|^2  + \frac{M^2}{\nu} \Delta t \|\nabla u_t\|^4_{L4(t_n, t_{n+1}; L^2(\Omega))}
    + \frac{M^2}{\nu} \Delta t^2 \|\nabla u(t_{n+1})\|^4,\label{non_linear_1}
    \end{gathered}
\end{equation}
\begin{equation}
\begin{gathered}
    b^*(\eta^n, v^{n+1}_h, \phi^{n+1}_h)\leq M\|\nabla \eta^n\|\|\nabla v^{n+1}_h\|\|\nabla \phi^{n+1}_h\|\\
    \leq \frac{2M^2}{\nu}\|\nabla \eta^n\|^2 \|\nabla v^{n+1}_h\|^2 +\frac{\nu}{8}\|\nabla \phi^{n+1}_h\|^2,
    \end{gathered}
\end{equation}
\begin{equation}
\begin{gathered}
    b^*(u(t_n),\eta^{n+1}, \phi^{n+1}_h) \leq C_2\|\nabla u(t_n)\|^{1/2} \| u(t_n)\|^{1/2}\|\nabla \eta^{n+1}\|\|\nabla \phi^{n+1}_h\|\\
    \leq \frac{\nu}{8}\|\nabla \phi^{n+1}_h\|^2 +\frac{2C_2^2}{\nu}\|\nabla u(t_n)\|\|u(t_n)\|\|\nabla \eta^{n+1}\|^2,
    \end{gathered}
\end{equation}
\begin{equation}
\begin{gathered}
    b^*(\phi^{n}_h, v^{n+1}_h, \phi^{n+1}_h)\leq C_2 \|\nabla \phi^n_h\|^{1/2}\|\nabla \phi^{n}_h\|^{1/2}\|
    \nabla v^{n+1}_h\|\|\nabla \phi^{n+1}_h\| \\
    \leq \frac{\nu}{8}\|\nabla \phi^{n+1}_h\|^2 + \frac{2C_2^2}{\nu}\|\nabla \phi^n_h\|\| \phi^n_h\|\|\nabla v^{n+1}_h\|^2\\
    \leq \frac{\nu}{8}\|\nabla \phi^{n+1}_h\|^2 + \frac{\nu}{8}\|\nabla \phi^n_h\|^2 +\frac{8C_2^4}{\nu^3}\|\nabla v^{n+1}_h\|^4 \|\phi^n_h\|^2.\label{non_linear_4}
    \end{gathered}
\end{equation}

\begin{equation}
\begin{gathered}
    \frac{1}{c^2}(\frac{\partial p}{\partial t}(t_{n+1}), \phi^{n+1}_{p,h})
    =\frac{1}{c^2} (\frac{\eta^{n+1}_{p}-\eta^{n}_{p}}{\Delta t}, \phi^{n+1}_{p,h}) -\frac{1}{c^2\Delta t}(\phi^{n+1}_{p,h}-\phi^{n}_{p,h}, \phi^{n+1}_{p,h})\\
     + \frac{1}{c^2} (\frac{\partial p}{\partial t}(t_{n+1}) -\frac{p(t_{n+1})-p(t_n)}{\Delta t},\phi^{n+1}_{p,h})\\
     \leq  \frac{16}{\mu_1 c^4 \Delta t} \int_{t_n}^{t_{n+1}} \|\eta_{p,t}\|^2_{-1}\, dt-\frac{1}{c^2 2\Delta t}(\|\phi^{n+1}_{p,h}\|^2-\|\phi^{n}_{p,h}\|^2+\|\phi^{n+1}_{p,h}-\phi^{n}_{p,h}\|^2)\\
     +\frac{\mu_1}{64}\|\phi^{n+1}_{p,h}\|^2 
     +\frac{16}{\mu_1c^4} \Delta t\|p_{tt}\|^2_{L^2(t_n,t_{n+1}; L^2(\Omega))} +\frac{\mu_1}{64}\|\phi^{n+1}_{p,h}\|^2.\label{p_t_term}
    \end{gathered}
\end{equation}
Assume $\mu_1\geq \mu_2$. Since $I_H$ is an $L^2$-projection, the estimate follows:
\begin{equation}
\begin{gathered}
    -\mu_1 (I_H(p(t_{n+1})-q^{n+1}_{h}), \phi^{n+1}_{p,h})-\mu_2 (I_H(q^{n+1}_h)-q^{n+1}_h,\phi^{n+1}_{p,h})\\
    \geq  -\frac{\mu_1}{2}C_1^2 H^2 \|\nabla p(t_{n+1})\|^2 +  \frac{\mu_1}{8} \|\phi^{n+1}_{p,h}\|^2 - 4\mu_1 \|\eta^{n+1}_p\|^2
\\
-\frac{2 (\mu_1-\mu_2)}{\mu_1} C_1^2 H^2 (\|\nabla \eta^{n+1}_p\|^2+\|\nabla p(t_{n+1})\|^2).\label{estimate-pressure-fully}
    \end{gathered}
    \end{equation}
For the following terms, we have
\begin{equation}
\begin{gathered}
     (\nabla \cdot \eta^{n+1}, \phi^{n+1}_{p,h})\leq \frac{\mu_1}{32}\|\phi^{n+1}_{p,h}\|^2 + \frac{8}{\mu_1} \|\nabla \eta^{n+1}\|^2,\\
    -(\eta^{n+1}_p, \nabla \cdot \phi^{n+1}_h)\leq \frac{3}{\nu}\|\eta^{n+1}_p\|^2 +\frac{\nu}{12}\|\nabla \cdot \phi^{n+1}_h\|^2.
    \end{gathered}
\end{equation}
Denote $A_0:=\frac{1}{c^2}\left(u(t_{n+1})\cdot \nabla p(t_{n+1}), \phi^{n+1}_{p,h} \right)$. We bound $A_0$ similarly as the semi-discrete analysis:
\begin{equation}
\begin{gathered}
A_0 \leq \frac{16}{3c^4\mu_1} \big[\|\nabla u(t_{n+1})\|^4 + \|\nabla p(t_{n+1})\|^4_{L^3}\big] + \frac{\mu_1}{32}\|\phi^{n+1}_{p,h}\|^2.\label{estimate-pressure-2-fully}
    \end{gathered}
\end{equation}
For $\nu (\nabla \eta^{n+1},\nabla \phi^{n+1}_h)$ and $\frac{\nu}{3}(\nabla \cdot \eta^{n+1}, \nabla \cdot \phi^{n+1}_h)$ terms, we have
\begin{equation}
\begin{gathered}
    \nu (\nabla \eta^{n+1},\nabla \phi^{n+1}_h)\leq \frac{\nu}{16}\|\nabla \phi^{n+1}_h\|^2 + \frac{4}{\nu}\|\nabla \eta^{n+1}\|^2,\\
    \frac{\nu}{3}(\nabla \cdot \eta^{n+1},
    \nabla \cdot \phi^{n+1}_h)\leq \frac{\nu}{6}\|\nabla  \eta^{n+1}\|^2 +  \frac{\nu}{6}\|\nabla \cdot \phi^{n+1}_h\|^2.
    \end{gathered}
\end{equation}
Lastly, we handle the nudging terms for velocity.
\begin{equation}
\begin{gathered}
    \chi (I_H(\phi^{n+1}_h), \phi^{n+1}_h)
    \geq \chi\|\phi^{n+1}_h\|^2 -\chi C_1^2 H^2 \|\nabla \phi^{n+1}_h\|^2,\\
    \chi (I_H(\eta^{n+1}), \phi^{n+1}_h) \leq \frac{\chi}{2}\|I_H(\eta^{n+1})\|^2 +\frac{\chi}{2}\|\phi^{n+1}_h\|^2.\label{estimate-velocity-nudging-fully}
    \end{gathered}
\end{equation}
Adding inequalities (\ref{u_t-term}), (\ref{eta_t-term}), (\ref{non_linear_1})--(\ref{estimate-velocity-nudging-fully}), we obtain
\allowdisplaybreaks

\begin{align}
&\frac{1}{2\Delta t}
\left(
\|\phi^{n+1}_h\|^2-\|\phi^{n}_h\|^2
+\|\phi^{n+1}_h-\phi^{n}_h\|^2
\right)
+\frac{\nu}{12}\|\nabla\cdot\phi^{n+1}_h\|^2
\nonumber\\
&+\frac{1}{2\Delta t}
\left(
\frac{1}{c^2}\|\phi^{n+1}_{p,h}\|^2
-\frac{1}{c^2}\|\phi^{n}_{p,h}\|^2
+\frac{1}{c^2}\|\phi^{n+1}_{p,h}-\phi^{n}_{p,h}\|^2
\right)
+\frac{\mu_1}{32}\|\phi^{n+1}_{p,h}\|^2
\nonumber\\
&+\frac{\nu}{16}\|\nabla\phi^{n+1}_h\|^2
+\frac{\nu}{8}
\left(
\|\nabla\phi^{n+1}_h\|^2
-\|\nabla\phi^{n}_h\|^2
\right)
+\left(
\frac{\nu}{16}-\chi C_1^2H^2
\right)
\|\nabla\phi^{n+1}_h\|^2
\nonumber\\
&+\frac{\chi}{4}\|\phi^{n+1}_h\|^2
+\frac{\chi}{4}
\left(
\|\phi^{n+1}_h\|^2-\|\phi^n_h\|^2
\right)
+\left(
\frac{\chi}{4}
-\frac{8C_2^4}{\nu^3}\|\nabla v^{n+1}_h\|^4
\right)
\|\phi^n_h\|^2
\nonumber\\
&\le
\frac{\chi}{2}\|I_H(\eta^{n+1})\|^2
+\left(
\frac{4}{\nu}
+\frac{\nu}{6}
+\frac{8}{\mu_1}
\right)
\|\nabla\eta^{n+1}\|^2
+\frac{2\Delta t}{3\nu}
\|u_{tt}\|^2_{L^2(t_n,t_{n+1};H^{-1} (\Omega))}
\nonumber\\
&+\frac{2}{\nu\Delta t}
\int_{t_n}^{t_{n+1}}\|\eta_t\|_{-1}^2\,dt
+\frac{M^2}{\nu}\Delta t
\|\nabla u_t\|^4_{L^4(t_n,t_{n+1};L^2(\Omega))}
+\frac{M^2}{\nu}\Delta t^2
\|\nabla u(t_{n+1})\|^4
\nonumber\\
&+\frac{2M^2}{\nu}
\|\nabla\eta^n\|^2
\|\nabla v_h^{n+1}\|^2
+\frac{2C_2^2}{\nu}
\|\nabla u(t_n)\|
\|u(t_n)\|
\|\nabla\eta^{n+1}\|^2
\nonumber\\
&+\frac{16}{\mu_1c^4\Delta t}
\int_{t_n}^{t_{n+1}}
\|\eta_{p,t}\|_{-1}^2\,dt
+\frac{16\Delta t}{\mu_1c^4}
\|p_{tt}\|^2_{L^2(t_n,t_{n+1};L^2(\Omega))}
+4\mu_1\|\eta^{n+1}_p\|^2
\nonumber\\
&+\frac{\mu_1}{2}C_1^2H^2
\|\nabla p(t_{n+1})\|^2
+2C_1^2H^2
\left(
\|\nabla\eta^{n+1}_p\|^2
+\|\nabla p(t_{n+1})\|^2
\right)
\nonumber\\
&+\frac{3}{\nu}\|\eta^{n+1}_p\|^2
+\frac{16}{3c^4\mu_1}
\left(
\|\nabla u(t_{n+1})\|^4
+\|\nabla p(t_{n+1})\|^4_{L^3}
\right).
\label{before-sum}
\end{align}
Applying the assumptions on the nudging parameters (\ref{fully-nudging-assumption}), and multiplying it by $2$, 
\allowdisplaybreaks
\begin{align}
&\frac{1}{\Delta t} \Big[\|\phi^{n+1}_h\|^2
+\frac{1}{c^2}\|\phi^{n+1}_{p,h}\|^2
-\|\phi^{n}_h\|^2
-\frac{1}{c^2}\|\phi^{n}_{p,h}\|^2\Big]
+\frac{\mu_1}{16}\|\phi^{n+1}_{p,h}\|^2
\nonumber\\
&+\frac{1}{\Delta t}
\left(
\|\phi^{n+1}_h-\phi^{n}_h\|^2
+\frac{1}{c^2}\|\phi^{n+1}_{p,h}-\phi^{n}_{p,h}\|^2
\right)
+\frac{\nu}{8}\|\nabla \phi^{n+1}_h\|^2
+\frac{\chi}{2}\|\phi^{n+1}_h\|^2
\nonumber\\
&+\frac{\nu}{4}
\left(
\|\nabla \phi^{n+1}_h\|^2
-\|\nabla \phi^{n}_h\|^2
\right)
+\frac{\chi}{2}
\left(
\|\phi^{n+1}_h\|^2
-\|\phi^n_h\|^2
\right)
+\frac{\alpha\chi}{2}\|\phi^n_h\|^2
\nonumber\\
&\le
\chi\|I_H(\eta^{n+1})\|^2
+\left(
\frac{8}{\nu}
+\frac{\nu}{3}
+\frac{16}{\mu_1}
\right)
\|\nabla\eta^{n+1}\|^2
+\frac{4\Delta t}{3\nu}
\|u_{tt}\|^2_{L^2(t_n,t_{n+1};H^{-1}(\Omega))}
\nonumber\\
&+\frac{4}{\nu\Delta t}
\int_{t_n}^{t_{n+1}}\|\eta_t\|_{-1}^2\,dt
+\frac{2M^2}{\nu}\Delta t
\|\nabla u_t\|^4_{L^4(t_n,t_{n+1};L^2(\Omega))}
+\frac{2M^2}{\nu}\Delta t^2
\|\nabla u(t_{n+1})\|^4
\nonumber\\
&+\frac{4M^2}{\nu}
\|\nabla\eta^n\|^2
\|\nabla v_h^{n+1}\|^2
+\frac{4C_2^2}{\nu}
\|\nabla u(t_n)\|
\|u(t_n)\|
\|\nabla\eta^{n+1}\|^2
\nonumber\\
&+\frac{32}{\mu_1c^4\Delta t}
\int_{t_n}^{t_{n+1}}
\|\eta_{p,t}\|_{-1}^2\,dt
+\frac{32\Delta t}{\mu_1c^4}
\|p_{tt}\|^2_{L^2(t_n,t_{n+1};L^2(\Omega))}
+8\mu_1\|\eta_p^{n+1}\|^2
\nonumber\\
&+(\mu_1+4)C_1^2H^2
\|\nabla p(t_{n+1})\|^2
+4C_1^2H^2 \|\nabla\eta_p^{n+1}\|^2
\nonumber\\
&+\frac{6}{\nu}\|\eta_p^{n+1}\|^2
+\frac{32}{3c^4\mu_1}
\left(
\|\nabla u(t_{n+1})\|^4
+\|\nabla p(t_{n+1})\|_{L^3}^4
\right).
\label{before-sum-after-assumps}
\end{align}

By a discrete Gronwall inequality  for the backward difference form \cite{emmrich1999discrete}, we have

\allowdisplaybreaks

\begin{align}
&\|\phi^{N}_h\|^2 +\frac{1}{c^2}\|\phi^{N}_{p,h}\|^2+ \frac{\nu \Delta t}{8} \sum_{n=0}^{N-1}
(1+\beta \Delta t)^{n-N} \|\nabla \phi^{n+1}_{h}\|^2\nonumber\\
&+ \sum_{n=0}^{N-1}
(1+\beta \Delta t)^{n-N} (\|\phi^{n+1}_h-\phi^{n}_h\|^2+\frac{1}{c^2}\|\phi^{n+1}_{p,h}-\phi^{n}_{p,h}\|^2)\nonumber\\
&+\Delta t \sum_{n=0}^{N-1} (1+\beta \Delta t)^{n-N} (\frac{\nu}{4}(\|\nabla \phi^{n+1}_h\|^2 - \|\nabla \phi^{n}_h\|^2)+\frac{\chi}{2} (\|\phi^{n+1}_h\|^2-\|\phi^n_{h}\|^2) )\nonumber\\
&\leq (1+\beta \Delta t)^{-N} (\|\phi^{0}_h\|^2+\frac{1}{c^2}\|\phi^{0}_{p,h}\|^2)\nonumber\\
&+(\mu_1+4) C_1^2 H^2 \max_{0\leq n\leq N-1} \|\nabla p(t_{n+1})\|^2
\Delta t \sum_{n=0}^{N-1} (1+\beta \Delta t)^{n-N}\nonumber\\
&+\Delta t \sum_{n=0}^{N-1} (1+\beta \Delta t)^{n-N}\bigg[
 \chi\|I_H(\eta^{n+1})\|^2 +(\frac{8}{\nu}+\frac{\nu}{3} +\frac{16}{\mu_1})\|\nabla \eta^{n+1}\|^2\nonumber\\
&\qquad + \frac{4\Delta t}{3\nu} \|u_{tt}\|^2_{L^2(t_n, t_{n+1}; H^{-1}(\Omega))} +\frac{4}{ \nu \Delta t} \|\eta_t\|^2_{L^2(t_n, t_{n+1}; H^{-1} (\Omega))} + \frac{2M^2}{\nu} \Delta t^2 \|\nabla u(t_{n+1})\|^4\nonumber\\
&\qquad + \frac{2M^2}{\nu} \Delta t \|\nabla u_t\|^4_{L^4(t_n, t_{n+1}; L^2(\Omega))}
    +\frac{4M^2}{\nu}\|\nabla \eta^n\|^2 \|\nabla v^{n+1}_h\|^2 \nonumber\\
&\qquad+\frac{4C_2^2}{\nu}\|\nabla u(t_n)\|\|u(t_n)\|\|\nabla \eta^{n+1}\|^2
    +\frac{32}{\mu_1 c^4 \Delta t} \|\eta_{p,t}\|^2_{L^2(t_n, t_{n+1}; H^{-1} (\Omega))} \nonumber\\
&\qquad+ \frac{32\Delta t}{\mu_1c^4} \|p_{tt}\|^2_{L^2(t_n,t_{n+1}; L^2(\Omega))}
+ 4 C_1^2 H^2\|\nabla \eta^{n+1}_p\|^2+ 8\mu_1 \|\eta^{n+1}_p\|^2
\nonumber\\
&\qquad
+\frac{6}{\nu}\|\eta^{n+1}_p\|^2 +\frac{32}{3c^4\mu_1} \left(\|\nabla u(t_{n+1})\|^4 + \|\nabla p(t_{n+1})\|^4_{L^3}\right)
    \bigg].
\label{after_gronwall_1}
\end{align}
Denote $A_1:=\Delta t \sum_{n=0}^{N-1}(1+\beta \Delta t)^{n-N} (\frac{\nu}{4}(\|\nabla \phi^{n+1}_h\|^2 - \|\nabla \phi^{n}_h\|^2)+\frac{\chi}{2} (\|\phi^{n+1}_h\|^2-\|\phi^n_{h}\|^2))$.
\begin{equation}
\begin{gathered}
    A_1=\frac{1}{1+\beta \Delta t} (\frac{\nu}{4}\|\nabla \phi^{N}_h\|^2 + \frac{\chi}{2}\|\phi^{N}_h\|^2) -(1+\beta \Delta t)^{-N} (\frac{\nu}{4}\|\nabla \phi^{0}_h\|^2 + \frac{\chi}{2}\|\phi^{0}_h\|^2)\\
    +\sum_{n=1}^{N-1} (1-\frac{1}{1+\beta \Delta t}) (1+\beta \Delta t)^{n-N} (\frac{\nu}{4}\|\nabla \phi^{n}_h\|^2 + \frac{\chi}{2}\|\phi^{n}_h\|^2).
    \end{gathered}
\end{equation}
Thus, only the initial negative contribution needs to be bounded. Since
\begin{equation}
\begin{gathered}
    (1+\beta \Delta t)^{-N}\leq \exp[-\frac{\beta}{1+\beta \Delta t} T],
    \end{gathered}
\end{equation}
\begin{equation}
    (1+\beta \Delta t)^{-N} (\frac{\nu}{4}\|\nabla \phi^{0}_h\|^2 + \frac{\chi}{2}\|\phi^{0}_h\|^2)\leq \exp[-\frac{\beta}{1+\beta \Delta t} T](\frac{\nu}{4}\|\nabla \phi^{0}_h\|^2 + \frac{\chi}{2}\|\phi^{0}_h\|^2).
\end{equation}
Denote $A_2:=
(\mu_1+4) C_1^2 H^2 \max_{0\leq n\leq N-1} \|\nabla p(t_{n+1})\|^2   \Delta t \sum_{n=0}^{N-1} (1+\beta \Delta t)^{n-N}$. Since
\begin{equation}
    \sum_{n=0}^{N-1} (1+\beta \Delta t)^{n-N} = \frac{1-(1+\beta \Delta t )^{-N}}{\beta \Delta t }\leq \frac{1}{\beta \Delta t},
\end{equation}
\begin{equation}
    A_2\leq \frac{(\mu_1+4)}{\beta} C_1^2 H^2 \max_{0\leq n\leq N-1} \|\nabla p(t_{n+1})\|^2.
\end{equation}
For the rest of the terms on the right hand side of (\ref{after_gronwall_1}), we use $(1+\beta \Delta t)^{n-N} \leq 1$.
\begin{equation}
    \begin{gathered}
   \Delta t \sum_{n=0}^{N-1} 
    \frac{4M^2}{\nu}\|\nabla \eta^n\|^2 \|\nabla v^{n+1}_h\|^2 \leq \frac{4M^2}{\nu} |\|\nabla v^h\||^2_{l^4(L^2)}|\|\nabla \eta\||^2_{l^4(L^2)}.
    \end{gathered}
\end{equation}
Denote 
 $A_3:=\Delta t \sum_{n=0}^{N-1} \frac{4C_2^2}{\nu}\|\nabla u(t_n)\|\|u(t_n)\|\|\nabla \eta^{n+1}\|^2$. We have 
\begin{equation}
\begin{split}
    A_3 \leq  \frac{4C_2^2}{\nu} |\|u\||_{l^\infty(L^2)} |\|\nabla u\||_{l^2(L^2)} |\|\nabla \eta\||^2_{l^4(L^2)}.
    \end{split}
\end{equation}
Hence, (\ref{after_gronwall_1}) becomes
\begin{equation}
\begin{gathered}
    \|\phi^{N}_h\|^2 +\frac{1}{c^2}\|\phi^{N}_{p,h}\|^2+ \frac{\nu \Delta t}{8} \sum_{n=0}^{N-1}
(1+\beta \Delta t)^{n-N} \|\nabla \phi^{n+1}_{h}\|^2
\leq  \exp[-\frac{\beta}{1+\beta \Delta t} T] \\ \big[\|\phi^{0}_h\|^2+\frac{1}{c^2}\|\phi^{0}_{p,h}\|^2+\frac{\nu}{4}\|\nabla \phi^{0}_h\|^2 + \frac{\chi}{2}\|\phi^{0}_h\|^2\big]
+ \frac{4\Delta t^2}{3\nu} \|u_{tt}\|^2_{L^2(0, T; H^{-1} (\Omega))}\\
+\chi|\|I_H(\eta)\||^2_{l^2(L^2)}
+\frac{(\mu_1+4)}{\beta} C_1^2 H^2 \max_{0\leq n\leq N-1} \|\nabla p(t_{n+1})\|^2
+\frac{4}{ \nu} \|\eta_t\|^2_{L^2(0, T; H^{-1} (\Omega))} \\
+(\frac{8}{\nu}+\frac{\nu}{3} +\frac{16}{\mu_1})|\|\nabla \eta\||^2_{l^2(L^2)}+ \frac{2M^2}{\nu} \Delta t^2 \big[|\|\nabla u\||^4_{l^4(L^2)}+   \|\nabla u_t\|^4_{L^4(0, T; L^2(\Omega))}\big]\\
 +\frac{32}{\mu_1 c^4} \|\eta_{p,t}\|^2_{L^2(0, T; H^{-1} (\Omega))}  + \frac{32\Delta t^2}{\mu_1c^4} \|p_{tt}\|^2_{L^2(0,T; L^2(\Omega))}
 + (8\mu_1+\frac{6}{\nu}) |\|\eta_p\||^2_{l^2(L^2)} \\
 +4 C_1^2 H^2|\|\nabla \eta_p\||^2_{l^2(L^2)}
+\frac{4M^2}{\nu} |\|\nabla v^h\||^2_{l^4(L^2)}|\|\nabla \eta\||^2_{l^4(L^2)}
\\
+\frac{32}{3c^4\mu_1} \big[|\|\nabla u\||^4_{l^4(L^2)} +|\|\nabla p\||^4_{l^4(L^3)} \big]
+\frac{4C_2^2}{\nu} |\|u\||_{l^\infty(L^2)} |\|\nabla u\||_{l^2(L^2)} |\|\nabla \eta\||^2_{l^4(L^2)}
.
\end{gathered}
\end{equation}
Applying the triangle inequality, we obtain the final result. 
\end{proof}

\section{Numerical tests}\label{numerical-tests}

With the pressure-nudging parameter at its optimal scaling $\mu_1=\mathcal{O}(H^{-2})$, the fully discrete backward Euler estimate of Section~\ref{error-estimates} Corollary \ref{corollary-fully-after-approx} and Remark \ref{remark-fully} reduces, at $t_N=T$, to
\begin{equation}\label{err-decomp-num}
		\|e_h^N\| + \tfrac{1}{c}\,\|e_{p,h}^N\|
		\;\lesssim\;
		e^{-\beta t_N}\Big(\|e_h^0\|+\tfrac1c\|e_{p,h}^0\|\Big)
		\;+\; C_H\, H
		\;+\; C_{\Delta t}\, \Delta t
		\;+\; C_h\, h^{m},
\end{equation}
where $C_H$, $C_{\Delta t}$, $C_h$ are positive constants, $\beta=\min\{\mu_1/16,\chi/2\}$, $m=2$ for Taylor--Hood $\mathbb{P}_2$--$\mathbb{P}_1$ ($\mathcal{O}(h^2)$ pressure, $\mathcal{O}(h^3)$ velocity in $L^2$), under assumptions~\eqref{fully-nudging-assumption} (which include $\mu_1\ge\mu_2$). All numerical tests below use the backward Euler method.

\paragraph{Setup.} FreeFEM~\cite{Hecht12} (v4.13), Taylor--Hood plain triangulations of $\Omega=(0,1)^2$. Accuracy is verified against the manufactured fields
\begin{equation}
\begin{gathered}
u_1 = \cos t\,\sin(\pi x)\sin(\pi y),\
u_2 = \sin t\,\sin(\pi x)\sin(\pi y),\\
p = \cos t\,\cos(\pi x)\cos(\pi y).
\end{gathered}
\end{equation}
Substituting $u_1,u_2,p$ into
\eqref{compressible_momentum}--\eqref{compressible_continuity} yields the body force $f$ and the continuity source
$g=\tfrac{1}{c^2}p_t+\nabla\cdot u$; that is, for the manufactured solution the right-hand side of \eqref{compressible_continuity} is replaced by $g$ rather than by zero. We impose no-slip boundary conditions on the nudged system, and the nudged flow is initially at rest at $t=0$. $I_H$ is the $L^2$-projection onto piecewise constants on a \emph{genuinely coarser} independent mesh $\mathcal{T}^H$ ($H>h$, never $H=h$). Spatial and parameter studies (Sections~\ref{sec:num-spatial}, \ref{sec:num-floor}, and \ref{sec:num-mu2}) compute the errors with respect to the manufactured exact solution. The temporal study (Section~\ref{sec:num-temporal}) computes the convergence rate using the successive-difference estimator. Unless otherwise stated, $c=1$, $\nu=1$, $\chi=100$, $\mu_1=\mu_2=100$. 
	
\subsection{Spatial convergence and space--time balance}\label{sec:num-spatial}\label{sec:num-balance}
Table~\ref{tab:spatial-balance}(a) fixes $\Delta t=0.005$ and refines $h$
($n_H=8$, $T=1.5$). The pressure converges at the expected second-order rate
throughout. The velocity initially exhibits super-convergence, with a rate of $2.34$, but the rate drops to $0.38$ and then $0.01$ as the mesh is further refined. This occurs because the $\mathcal{O}(h^3)$ error becomes smaller than the fixed
time-discretization error. Consequently, the
$C_{\Delta t}\Delta t$ term in \eqref{err-decomp-num} dominates the total error, rather than a limitation of the spatial discretization. These results show that the observed error floor is determined by $\Delta t$ rather than the final time $T$.

Table~\ref{tab:spatial-balance}(b) removes this confound by balancing $\Delta t=h^2$ ($n_H=8$, $T=1.5$): the pressure rate is
$2$ at every refinement, and the velocity rate steadily decreases toward $2$ as the $\mathcal{O}(\Delta t)=\mathcal{O}(h^2)$ term overtakes the $\mathcal{O}(h^3)$ term --- confirming that a first-order-in-time scheme delivers globally second-order accuracy once $\Delta t\sim h^2$. 
\begin{table}[H]
	\centering
	\resizebox{0.85\textwidth}{!}{%
	\begin{minipage}{\textwidth}
		\centering \small
		\setlength{\tabcolsep}{2.pt}
		\begin{minipage}{0.4\linewidth}
			\centering\textbf{(a) Fixed $\Delta t=0.005$}\\[3pt]
			\begin{tabular}{cccccc}
				\hline
				$n_h$ & $h$ & $\|u(T)-v^N_h\|$ & rate & $\|p(T)-q^N_h\|$ & rate \\
				\hline
				$8$  & $0.1250$ & $2.28\!\times\!10^{-4}$ & $-$   & $7.72\!\times\!10^{-4}$ & $-$   \\
				$16$ & $0.0625$ & $4.51\!\times\!10^{-5}$ & 2.34 & $1.92\!\times\!10^{-4}$ & 2.01 \\
				$32$ & $0.0312$ & $3.48\!\times\!10^{-5}$ & 0.38 & $4.77\!\times\!10^{-5}$ & 2.01 \\
				$48$ & $0.0208$ & $3.46\!\times\!10^{-5}$ & 0.01 & $2.11\!\times\!10^{-5}$ & 2.01 \\
				\hline
			\end{tabular}
		\end{minipage}%
		\hfill
		\begin{minipage}{0.45\linewidth}
			\centering\textbf{(b) Balanced $\Delta t=h^2$}\\[3pt]
			\begin{tabular}{cccccc}
				\hline
				$n_h$ & $h$ & $\|u(T)-v^N_h\|$ & rate & $\|p(T)-q^N_h\|$ & rate \\
				\hline
				$8$  & $0.1250$ & $2.75\!\times\!10^{-4}$ & $-$   & $8.95\!\times\!10^{-3}$ & $-$   \\
				$16$ & $0.0625$ & $4.24\!\times\!10^{-5}$ & 2.69 & $2.20\!\times\!10^{-3}$ & 2.03 \\
				$24$ & $0.0417$ & $1.56\!\times\!10^{-5}$ & 2.47 & $9.74\!\times\!10^{-4}$ & 2.01 \\
				$32$ & $0.0312$ & $8.03\!\times\!10^{-6}$ & 2.31 & $5.47\!\times\!10^{-4}$ & 2.00 \\
				$48$ & $0.0208$ & $3.31\!\times\!10^{-6}$ & 2.19 & $2.43\!\times\!10^{-4}$ & 2.00 \\
				\hline
			\end{tabular}
		\end{minipage}
	\end{minipage}%
	}
	\caption{$\chi=100$, $\mu_1=\mu_2=100$, $\nu=1$, $c=1$. (a) fixed $\Delta t=0.005$, $T=1.5$: velocity hits the temporal floor at $n_h\ge32$. (b) $\Delta t=h^2$, $T=1.5$: pressure exactly second order; velocity trending to second order.}
	\label{tab:spatial-balance}
\end{table}
	
\subsection{Temporal order}\label{sec:num-temporal}

Since the $\mathcal{O}(H)$ error contribution is $\Delta t$-independent, we isolate the temporal order via a successive-difference estimator~\cite{han2025turbulence,olof_convergence} (used for the same purpose in~\cite{CibikFang26}, Remark~4.2),

\begin{equation*}
		p=\log_2\!\frac{\|v_{\Delta t}-v_{\Delta t/2}\|}{\|v_{\Delta t/2}-v_{\Delta t/4}\|},
\end{equation*}
on a fixed mesh ($n_h=32$, $n_H=8$, $T=1.5$). Since the $\mathcal{O}(H)$ contribution is independent of $\Delta t$, it cancels in the estimator, so $p$ measures the temporal order alone. We denote $\Delta v$ and $\Delta p$ the difference of two successive solutions for velocity and pressure, respectively; and $p_v$ as the convergence rate for velocity and $p_p$ as the convergence rate of pressure.

Figure~\ref{fig:temporal} shows the observed temporal convergence rates for the backward Euler scheme. The velocity converges at first order throughout the refinements, in agreement with the theoretical $\mathcal{O}(\Delta t)$ error estimate. The pressure convergence rate approaches first order as $\Delta t$ decreases, indicating that the asymptotic temporal regime has been reached.

\begin{figure}[H]
		\centering
		\begin{minipage}{0.56\linewidth}
	\centering\small
			\begin{tabular}{ccccc}
				\hline
				$\Delta t$ & $\|\Delta v\|$ & $p_v$ & $\|\Delta q\|$ & $p_p$ \\
				\hline
				$0.1$   & $3.36\!\times\!10^{-4}$ & 1.00 & $1.08\!\times\!10^{-5}$ & 1.34 \\
				$0.05$  & $1.68\!\times\!10^{-4}$ & 1.00 & $4.28\!\times\!10^{-6}$ & 1.15 \\
				$0.025$ & $8.41\!\times\!10^{-5}$ & 1.00 & $1.92\!\times\!10^{-6}$ & 1.07 \\
				\hline
			\end{tabular}
		\end{minipage}%
		\begin{minipage}{0.44\linewidth}
			\centering
			\includegraphics[width=\linewidth]{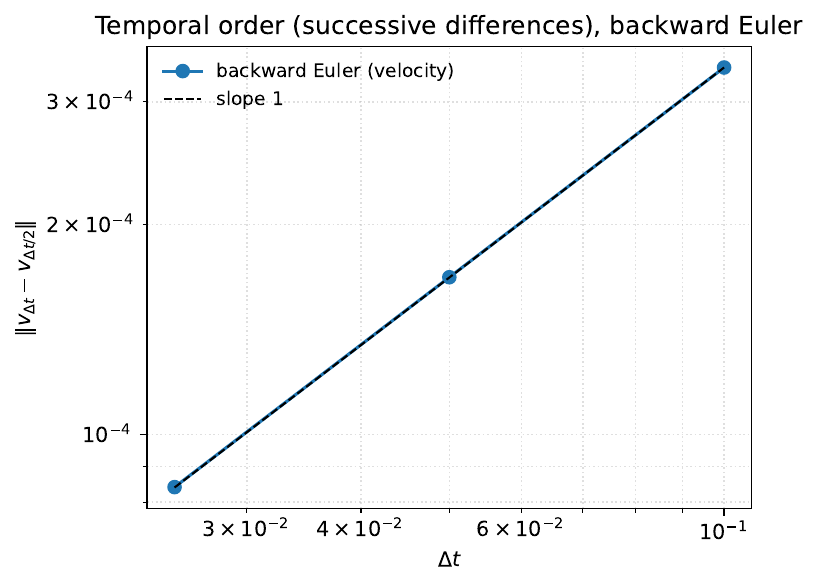}
		\end{minipage}
		\caption{Backward Euler temporal order: $p_v\approx1.00$ at every refinement, confirming the first-order $C_{\Delta t}\Delta t$ term exactly; $p_p$ tightens toward $1$ as $\Delta t$ shrinks.}
		\label{fig:temporal}
\end{figure}
	
\subsection{Effect of the observation mesh size $H$}\label{sec:num-floor}
To examine the effect of the observation mesh size, we apply the $L^2$-projection
$I_H$ to the exact solution to obtain observations on a coarse mesh. We fix the fine mesh $n_h=32$, $\Delta t=0.02$, $T=1.5$, $\chi=100$, $\mu_1=\mu_2=100$, varying the coarse observation mesh $n_H\in\{4,6,8,12\}$. 

Figure~\ref{fig:hfloor} shows the velocity and pressure errors as functions of the observation mesh size $H$. The fitted log–log slopes are $2.00$ for the velocity and $1.16$ for the pressure. These results are consistent with the theoretical $\mathcal{O}(H)$ dependence predicted by the error estimate. The observed second-order convergence of the velocity error indicates that the $\mathcal{O}(H)$ model-error term is not the dominant source of error over the range of $H$ considered.
	
\begin{figure}[H]
	\centering
		\begin{minipage}{0.42\linewidth}
			\centering\small
			\begin{tabular}{cccc}
				\hline
				$n_H$ & $H$ & $\|u(T)-v^N_h\|$ & $\|p(T)-q^N_h\|$ \\
				\hline
				$4$  & $0.250$ & $3.22\!\times\!10^{-2}$ & $9.66\!\times\!10^{-3}$ \\
				$6$  & $0.167$ & $1.34\!\times\!10^{-2}$ & $6.28\!\times\!10^{-3}$ \\
				$8$  & $0.125$ & $7.59\!\times\!10^{-3}$ & $4.25\!\times\!10^{-3}$ \\
				$12$ & $0.083$ & $3.56\!\times\!10^{-3}$ & $2.74\!\times\!10^{-3}$ \\
				\hline
			\end{tabular}
		\end{minipage}
		\hfill
		\begin{minipage}{0.5\linewidth}
			\centering
			\includegraphics[width=\linewidth]{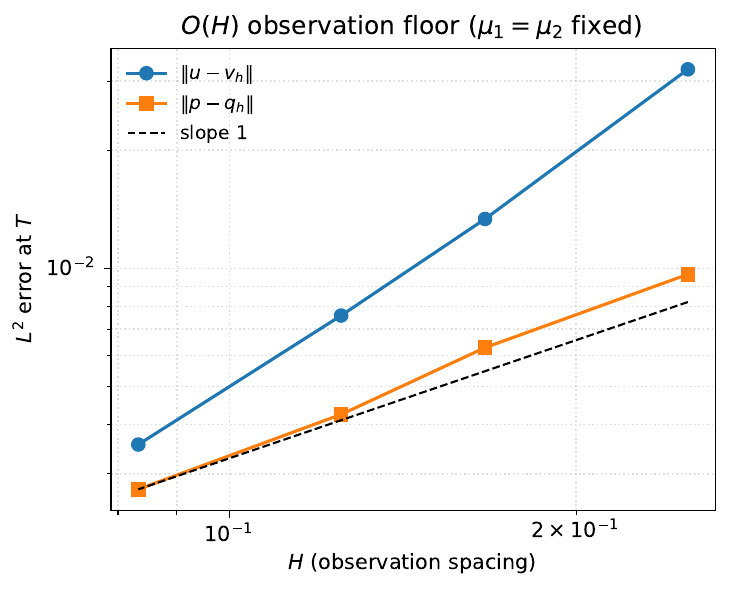}
		\end{minipage}
	\caption{Velocity and pressure errors versus the observation mesh size $H$. The fitted log--log slopes are $2.00$ for the velocity and $1.16$ for the pressure. The pressure slope is consistent with the linear $C_H H$ term in \eqref{err-decomp-num}; the velocity error decays faster than $\mathcal{O}(H)$ over the range of $H$ tested, indicating that the model error term is not the dominant contribution to the velocity error in this regime.}
		\label{fig:hfloor}
	\end{figure}
	
\subsection{Effect of the ratio $\mu_2/\mu_1$ on pressure accuracy}\label{sec:num-mu2}
	
By linearity of $I_H$, we have
    \begin{equation}
        -\mu_1(I_H(p)-I_H(q))-\mu_2(I_H(q)-q) = -\mu_1
	I_H(p)+(\mu_1-\mu_2)I_H(q)+\mu_2 q,
    \end{equation}
and \eqref{estimate-pressure-fully} carries an explicit $(\mu_1-\mu_2)/\mu_1$ term. We vary the ratio $\mu_2/\mu_1$ for two values of the pressure nudging parameter, $\mu_1\in\{16,32\}$, with $n_H=2$ ($H=0.5$), $n_h=32$, $\Delta t=0.02$ and $T=1.5$.  Only the pressure error is reported, since the velocity error is insensitive to $\mu_2$ throughout. The three interior ratios $\mu_2/\mu_1\in\{0.25,0.5,0.75\}$ are included in order to resolve the transition away from $\mu_2=0$.

Figure \ref{fig:musweep} shows the dependence of the pressure error on the ratio $\mu_2 / \mu_1$. The pressure error is largest when no pressure regularization is applied ($\mu_2=0$) and decreases substantially once $\mu_2>0$. The error reaches a shallow minimum near $\mu_2/\mu_1 \approx 1.5-2$ before increasing gradually for larger values of $\mu_2$. Similar trends are observed for both choices of $\mu_1$. These results demonstrate that pressure regularization is important for long-time accuracy.
	
\begin{figure}[H]
	\centering
	\begin{minipage}{0.55\linewidth}
		\centering
		\resizebox{\linewidth}{!}{%
		\begin{tabular}{ccc}
			\hline
			$\mu_2/\mu_1$ & $\|p(T)-q^N_h\|,\mu_1{=}16$ & $\|p(T)-q^N_h\|,\mu_1{=}32$ \\
			\hline
			$0$            & $3.50\!\times\!10^{-1}$ & $3.49\!\times\!10^{-1}$ \\
			$0.25$         & $9.21\!\times\!10^{-2}$ & $5.37\!\times\!10^{-2}$ \\
			$0.5$          & $5.42\!\times\!10^{-2}$ & $3.23\!\times\!10^{-2}$ \\
			$0.75$         & $4.00\!\times\!10^{-2}$ & $2.48\!\times\!10^{-2}$ \\
			$1(\mu_1{=}\mu_2)$ & $3.29\!\times\!10^{-2}$ & $2.13\!\times\!10^{-2}$ \\
			$1.5$          & $2.68\!\times\!10^{-2}$ & $1.89\!\times\!10^{-2}$ \\
			$2$            & $2.59\!\times\!10^{-2}$ & $2.02\!\times\!10^{-2}$ \\
			$3$            & $3.18\!\times\!10^{-2}$ & $2.83\!\times\!10^{-2}$ \\
			$4$            & $4.28\!\times\!10^{-2}$ & $4.00\!\times\!10^{-2}$ \\
			$6$            & $7.18\!\times\!10^{-2}$ & $6.95\!\times\!10^{-2}$ \\
			$8$            & $1.08\!\times\!10^{-1}$ & $1.06\!\times\!10^{-1}$ \\
			\hline
		\end{tabular}%
		}
	\end{minipage}%
	\hfill
	\begin{minipage}{0.43\linewidth}
		\centering
		\includegraphics[width=\linewidth]{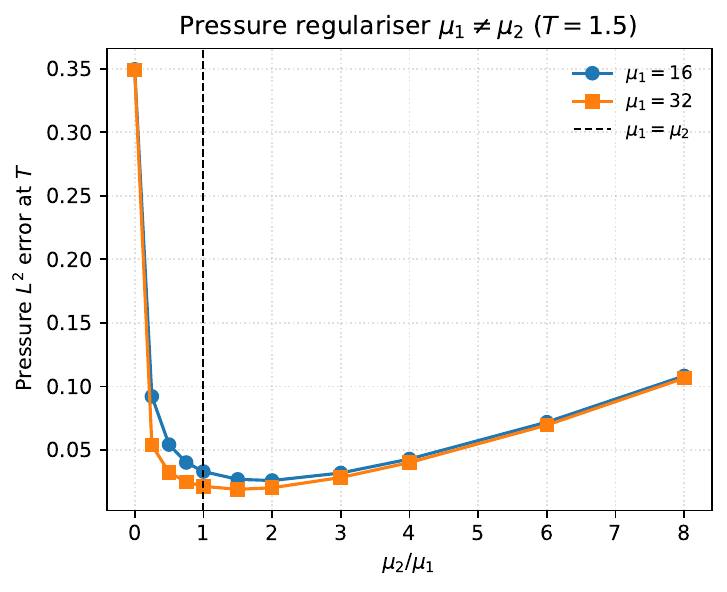}
	\end{minipage}
	\caption{Pressure error versus $\mu_2/\mu_1$ for two values of $\mu_1$. The error is largest at $\mu_2=0$, decreases sharply once a pressure regularizer is introduced, reaches a shallow minimum near $\mu_2/\mu_1\approx1.5$--$2$, and then increases gradually for $\mu_2\gtrsim4\mu_1$. The results are similar for both values of $\mu_1$. The test shows that the regularizer is essential for long-time stability and not merely a minor accuracy correction. Numerics support $\mu_1\ge\mu_2$, with the practical optimum slightly above $\mu_1=\mu_2$.}
	\label{fig:musweep}
\end{figure}
	
\section{Conclusion}\label{sec:discussion}

Section~\ref{error-estimates} establishes semi-discrete finite element error estimate, and a fully discrete error estimate for the backward Euler scheme. The analysis shows that the contributions from the initial velocity and pressure errors decay exponentially fast in time, with decay rate determined by \(\beta=\mathcal{O}(\chi,\mu_1)\). In the backward Euler case, the corresponding discrete decay factor is \((1+\beta\Delta t)^{-1}\) per timestep. The remaining modeling error arises from the mismatch between the slightly compressible NSE and the nudged incompressible system. Under the stated assumptions, the velocity and pressure errors are bounded by terms of size \(\mathcal{O}(H+\mu_1^{-1/2})\).

We verify the theoretical results for the backward Euler scheme with a manufactured exact solution in Section \ref{numerical-tests}.  We use the Taylor--Hood pair, which attains its optimal spatial rates ($\mathcal{O}(h^2)$ pressure, $\mathcal{O}(h^3)$ velocity) until the fixed-$\Delta t$ temporal floor intervenes; balancing $\mathcal{O}(\Delta t)=\mathcal{O}(h^2)$ removes that confound and delivers global second order. In Section \ref{sec:num-floor}, we study the effect of observation data mesh size $H$. The coarser observation mesh gives a clean linear $\mathcal{O}(H)$ pressure floor, which is consistent with the model error of $\mathcal{O}(H+\frac{1}{\sqrt{\mu_1}})$.
    
In our analysis, we assume that $\mu_1 \ge \mu_2$. In Section~\ref{sec:num-mu2}, we examine the pressure accuracy for different choices of $\mu_1$ and $\mu_2$. The numerical results show that omitting $\mu_2$ or taking $\mu_2$ much larger than $\mu_1$ leads to worse pressure accuracy, supporting the theoretical assumption that $\mu_1 \ge \mu_2$. Future work includes analyzing error estimates for higher-order time discretization methods, such as BDF2, and extending the method to other finite element pairs and three--dimensional problems with complex flow structures.

\section*{Acknowledgments}
The authors thank Professor William Layton for suggesting this problem and for many insightful discussions.

\bibliographystyle{plain}
\bibliography{reference_new}
\end{document}